\numberwithin{equation}{section}
\newcommand{\rank}{\mathop{\rm rank} }
\newtheorem{Thm}{Theorem}[section]
\newtheorem{Prop}[Thm]{Proposition}
\newtheorem{Lem}[Thm]{Lemma}
\theoremstyle{definition}
\newtheorem{Rem}[Thm]{Remark}
\newtheorem{Ex}[Thm]{Example}
\title{Computing DM-decomposition of \\a partitioned matrix with rank-1 blocks}
\author{Hiroshi HIRAI \\
Department of Mathematical Informatics, \\
Graduate School of Information Science and Technology,   \\
University of Tokyo, Tokyo, 113-8656, Japan.\\
\texttt{\normalsize hirai@mist.i.u-tokyo.ac.jp}}
\begin{document}
\maketitle
\begin{abstract}
In this paper, we develop a polynomial time algorithm to 
compute a Dulmage-Mendelsohn-type decomposition of a 
matrix partitioned into submatrices of rank at most $1$.
\end{abstract}

Keywords: Dulmage-Mendelsohn decomposition, partitioned matrix, submodular function, modular lattice, matroid. 

MSC classes: 15A21,  15B99, 68Q25.

\section{Introduction}
The {\em Dulmage-Mendelsohn decomposition (DM-decomposition)}~\cite{DulmageMendelsohn58,DulmageMendelsohn59} of an $n \times m$ matrix $A$ is
a canonical block-triangulation under transformation
\[
A \mapsto P^{\top}AQ
\]
for permutation matrices $P$ and $Q$; 
see also \cite[Section 4.3]{LovaszPlummer} and \cite[Section 2.2.3]{MurotaBook}.
The DM-decomposition exploits 
zero submatrices of the largest size ($\geq \max \{n,m\}$), 
where the size 
is defined as the sum of numbers of rows and columns. 
Finding such a zero submatrix of $A$ 
is nothing but the maximum stable set problem in 
the bipartite graph associated with the nonzero pattern of entries of~$A$. 
The family of maximum stable sets is regarded as 
the minimizer set of a submodular function, and  forms a distributive lattice.
The DM-decomposition is obtained by 
arranging rows and columns with respect to
a maximal chain of this distributive lattice. 

The present paper addresses 
a generalization of the DM-decomposition considered
by Ito, Iwata, and Murota~\cite{ItoIwataMurota94}; see also \cite[Section 4.8]{MurotaBook}.
This generalization deals with a matrix $A$ partitioned into submatrices in the following form:
\[
A = \left(
\begin{array}{ccccc}
A_{11} & A_{12} &\cdots & A_{1\nu} \\
A_{21} & A_{22} &\cdots & A_{2\nu} \\
\vdots & \vdots & \ddots & \vdots \\
A_{\mu1}&A_{\mu2} &\cdots & A_{\mu \nu}
\end{array}\right),
\]
where $A_{\alpha \beta}$ is an $n_\alpha \times m_\beta$ matrix for $\alpha=1,2,\ldots,\mu$, 
$\beta=1,2,\ldots,\nu$. 
Such a matrix is called a {\em partitioned matrix of 
type $(n_1,n_2, \ldots, n_\mu; m_1,m_2,\ldots,m_\nu)$}. 
In this setting, 
an admissible transformation is $A \mapsto$
\[
P^{\top} \left(
\begin{array}{cccc}
E_1^{\top} & O  & \cdots & O  \\
O & E_2^{\top} & \ddots & \vdots \\
\vdots & \ddots & \ddots &   O\\
O & \cdots & O& E_{\mu}^{\top}
\end{array}
\right)
\left(
\begin{array}{ccccc}
A_{11} & A_{12} &\cdots & A_{1\nu} \\
A_{21} & A_{22} &\cdots & A_{2\nu} \\
\vdots & \vdots & \ddots & \vdots \\
A_{\mu1}&A_{\mu2} &\cdots & A_{\mu \nu}
\end{array}\right)
\left(
\begin{array}{cccc}
F_1 & O  & \cdots & O  \\
O & F_2 & \ddots & \vdots \\
\vdots & \ddots & \ddots &   O\\
O & \cdots & O& F_{\nu}
\end{array}
\right)
Q,
\]
where $E_\alpha$ is a nonsingular $n_\alpha \times n_\alpha$ matrix for $\alpha = 1,2,\ldots,\mu$, 
$F_\beta$ is a nonsingular $m_\beta \times m_\beta$ matrix 
for $\beta = 1,2,\ldots, \nu$, 
$P$ and $Q$ are permutation matrices of size $n$ and $m$, respectively.
Ito, Iwata, and Murota~\cite{ItoIwataMurota94} showed the existence of
a canonical block-triangulation under this transformation, 
which we call the {\em DM-decomposition}.
This generalization of DM-decomposition is obtained from 
the minimizer set of a submodular function on a modular lattice.

Submodular optimization on modular lattices is 
an undeveloped area of combinatorial optimization, 
and has just been started~\cite{FKMTT14,Kuivinen11}.
It is an open problem in \cite[p. 1252]{ItoIwataMurota94} to design a polynomial time algorithm to compute DM-decomposition of  partitioned matrices.
Currently such an algorithm
is known for very restricted classes of partitioned matrices.  
For a partitioned matrix of type $(n;m)$ with $\mu = \nu = 1$, 
the DM-decomposition is the rank normal form, 
and is computed via Gaussian elimination.
For a partitioned matrix of type 
$(1,1,\ldots,1;1,1,\ldots,1)$, 
the DM-decomposition coincides with the original one~\cite{DulmageMendelsohn58,DulmageMendelsohn59}, 
and is computed via bipartite matching algorithm.
For a partitioned matrix of type 
$(n_1,n_2,\ldots,n_{\mu}; 1,1,\ldots,1)$ 
(or $(1,1,\ldots,1;m_1,m_2,\ldots,m_{\nu})$), 
the DM-decomposition  
is the 
{\em combinatorial canonical form (CCF) of multilayered mixed matrix}~\cite{MurotaBook,MurotaIriNakamura87}, 
and is computed via matroid union algorithm.
To the best of the author's knowledge, 
the computational complexity of other cases is completely unknown.

The main result of this paper is a polynomial time algorithm 
of the DM-decomposition for a new class of partitioned matrices, 
which generalizes the above CCF case.
\begin{Thm}\label{thm:main} 
	Let $A = (A_{\alpha \beta})$ be a partitioned matrix.
	Suppose that each submatrix $A_{\alpha \beta}$ has rank at most $1$.
	Then the DM-decomposition of $A$ is computed in polynomial time.
\end{Thm}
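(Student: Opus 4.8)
The plan is to reduce the problem to matroid intersection. First I would factor each nonzero block as a rank-$1$ outer product $A_{\alpha\beta} = p_{\alpha\beta} q_{\alpha\beta}^{\top}$ with $p_{\alpha\beta} \in \RR^{n_\alpha}$ and $q_{\alpha\beta} \in \RR^{m_\beta}$, and record the two families of direction vectors: for each row-block $\alpha$ the vectors $\{p_{\alpha\beta}\}_\beta \subseteq \RR^{n_\alpha}$, and for each column-block $\beta$ the vectors $\{q_{\alpha\beta}\}_\alpha \subseteq \RR^{m_\beta}$. Let $r_\alpha$ and $s_\beta$ denote the rank functions of the linear matroids carried by these two families, and let $N$ be the ground set of all nonzero blocks $(\alpha,\beta)$.

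The key simplification forced by the rank-$1$ hypothesis is that the constraint defining the submodular function becomes disjunctive. Extracting a zero submatrix under an admissible transformation amounts to choosing subspaces $U_\alpha \le \RR^{n_\alpha}$ and $V_\beta \le \RR^{m_\beta}$ that annihilate every block, i.e.\ $U_\alpha^{\top} A_{\alpha\beta} V_\beta = O$; since $A_{\alpha\beta} = p_{\alpha\beta}q_{\alpha\beta}^{\top}$, this factors as $(U_\alpha^\top p_{\alpha\beta})(q_{\alpha\beta}^\top V_\beta)=O$, which holds if and only if $U_\alpha \subseteq p_{\alpha\beta}^{\perp}$ or $V_\beta \subseteq q_{\alpha\beta}^{\perp}$. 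Thus each block is ``covered'' either on its row side or on its column side. Writing $S_\alpha$ (resp.\ $T_\beta$) for the set of blocks covered on the row (resp.\ column) side, the largest admissible subspaces have $\dim U_\alpha = n_\alpha - r_\alpha(S_\alpha)$ and $\dim V_\beta = m_\beta - s_\beta(T_\beta)$, so maximizing the size $\sum_\alpha \dim U_\alpha + \sum_\beta \dim V_\beta = n + m - \bigl(\sum_\alpha r_\alpha(S_\alpha) + \sum_\beta s_\beta(T_\beta)\bigr)$ of the zero submatrix reduces to minimizing $\sum_\alpha r_\alpha(S_\alpha) + \sum_\beta s_\beta(T_\beta)$ over all coverings of $N$.

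Next I would recognize this as matroid intersection. Let $M_\rho$ be the direct sum over row-blocks of the matroids $r_\alpha$ and $M_\sigma$ the direct sum over column-blocks of the matroids $s_\beta$, both on the common ground set $N$, with rank functions $\rho$ and $\sigma$. Since both are monotone, the minimum over coverings equals $\min_{S \subseteq N}[\rho(S) + \sigma(N \setminus S)]$, which by the matroid intersection theorem equals the maximum size of a common independent set of $M_\rho$ and $M_\sigma$. This value and a minimizer $S$ are then computable in polynomial time by the matroid intersection algorithm using linear-matroid rank oracles, and the admissible transformation itself is recovered from orthogonal complements of the spans indexed by $S_\alpha$ and $T_\beta$.

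Finally, to obtain the full DM-decomposition rather than a single extremal submatrix, I would pass to the collection of \emph{all} minimizers and invoke the principal-partition (DM-type) structure theory for matroid intersection: the minimizers form a lattice, and a maximal chain of this lattice refines $N$ into ordered blocks; translating the chain back through $S_\alpha \mapsto U_\alpha$ and $T_\beta \mapsto V_\beta$ yields the block-triangular arrangement of rows and columns. The main obstacle I anticipate is the reduction carried out in the second step: one must rigorously justify that minimizing the genuine Ito--Iwata--Murota submodular function over the enormous modular lattice of all subspaces may be replaced by the purely combinatorial minimization over matroid flats---that is, that optimal $U_\alpha, V_\beta$ can always be taken as orthogonal complements of spans of direction vectors---and that the canonical modular-lattice structure of the minimizers is recovered \emph{faithfully} from the matroid-intersection minimizer lattice. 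Handling the coupling created by a single vector $p_{\alpha\beta}$ (or $q_{\alpha\beta}$) participating in several block constraints simultaneously, so that the two direct-sum matroids interlock correctly on $N$, is the crux of this equivalence.
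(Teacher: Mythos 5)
Your proposal is correct and follows essentially the same route as the paper: the rank-$1$ factorization forces the disjunctive condition $U_\alpha \subseteq p_{\alpha\beta}^{\perp}$ or $V_\beta \subseteq q_{\alpha\beta}^{\perp}$ (the paper's Lemma~\ref{lem:rank1}), which reduces the maximum stable subspace problem to minimizing $\rho(S)+\sigma(N\setminus S)$ over the direct sums of linear matroids, i.e.\ to matroid intersection / independent matching (Theorem~\ref{thm:min-max}), and the DM-decomposition is then read off a maximal chain of the distributive lattice of minimizers obtained from the auxiliary digraph. The ``crux'' you flag---that optimal subspaces may be taken as intersections of the kernel hyperplanes and that the minimizer lattice is carried over faithfully---is exactly what the paper settles in Lemma~\ref{lem:cover} and the Proposition of Section~\ref{sec:DM-rank1}, by the maximality argument you already sketch.
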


The rest of the paper is organized as follows.
In Section~\ref{sec:pre}, we provide a necessary background 
for lattice and matroid.
In Section~\ref{sec:DM}, we formally introduce the DM-decomposition of partitioned matrices.
Our view is different from Ito, Iwata, and Murota~\cite{ItoIwataMurota94}.
We view a partitioned matrix $A$ as a bilinear form 
on vector space $U \times V$, 
and formulate the problem, 
called {\em maximum stable subspace problem (MSSP)}, of finding 
a vector subspace $(X,Y)$ 
in a specified sublattice of all vector subspaces of $U \times V$
such that $A$ vanishes on $X \times Y$ 
and $\dim X + \dim Y$ is maximum.
This vector-space generalization of 
the bipartite stable set problem 
seems interesting in its own right.
Then the DM-decomposition is obtained by 
a maximal chain of the family of maximum stable subspaces.   
In Section~\ref{sec:DM-rank1}, 
we deal with the special case where each submatrix 
has rank at most $1$.
We show that (MSSP) reduces to 
the {\em maximum independent matching problem}~\cite{Brualdi70}, 
which is a version of matroid intersection problem.
From a maximum independent matching, 
the transformation matrices for the DM-decomposition 
are obtained in polynomial time.

\paragraph{Remark.} After the submission, 
we recognized that significant developments closely related to (MSSP) occured recently; see \cite{HamadaHirai17} and references therein.
We also found that the paper~\cite{Lovasz89} by Lov\'asz in 1989 contains  
an essentially equivalent problem of (MSSP) for a slightly more general setting. 
In this paper, he has already showed that the rank-1 case of (MSSP) reduces to matroid intersection, though the construction of a maximal chain of maximum stable subspaces, which we need,  is not clear.

\section{Preliminaries}\label{sec:pre}

\subsection{Lattice}
A {\em lattice} is a partially ordered set (poset) $({\cal L},\preceq)$
such that every pair $p,q$ of elements 
has meet $p \wedge q$ (greatest common lower bound) 
and join $p \vee q$ (lowest common upper bound).
By $p \prec q$ we mean $p \preceq q$ and $p \neq q$.
A sequence $p_0 \prec p_1 \prec \cdots \prec p_k$ of 
pairwise comparable elements is called a {\em chain} from $p_0$ to $p_k$ where $k$ is called the length. 
In this paper, we only consider lattices with the properties 
that both the minimum element and the maximum element exist and the length of any maximal chain from the minimum to the maximum is finite.
The rank $r(p)$ of element $p$ is defined 
as the maximum length of a chain from the minimum to $p$. 

A lattice ${\cal L}$ is said to be {\em distributive} 
if $x \vee (y \wedge z)= (x \vee y)\wedge (x \vee z)$ 
and $x \wedge (y \vee z)= (x \wedge y)\vee (x \wedge z)$ 
hold for every triple $x,y,z$ of elements.
A canonical example of a distributive lattice 
is the family ${\cal J}({\cal P})$ of all ideals of a (finite) poset ${\cal P}$,  
where an {\em ideal} of ${\cal P}$ is a subset $J \subseteq {\cal P}$ of elements 
with the property that $p \preceq q \in J$ implies $p \in J$.
The partial order of ideal family ${\cal J}({\cal P})$ is given by the inclusion order. 
The following is a simpler version of the Birkhoff representation theorem.
\begin{Lem}
	A lattice ${\cal L}$ is distributive if  and only if 
	it is isomorphic to ${\cal J}({\cal P})$ for some poset~${\cal P}$.
\end{Lem}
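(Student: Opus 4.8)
The plan is to prove the two implications separately, treating the forward direction as routine and concentrating the effort on the representation direction. For the easy direction, suppose ${\cal L} \cong {\cal J}({\cal P})$. In the ideal family ordered by inclusion, the meet of two ideals is their intersection and the join is their union (one checks that the union of two ideals of ${\cal P}$ is again an ideal). Since set intersection and union satisfy the distributive laws, ${\cal J}({\cal P})$ is distributive, and distributivity is preserved under a lattice isomorphism. This I would dispatch in a couple of lines.

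For the converse, the construction I would use takes ${\cal P}$ to be the poset of \emph{join-irreducible} elements of ${\cal L}$ --- those elements $p$ distinct from the minimum that cannot be written as $p = a \vee b$ with $a \prec p$ and $b \prec p$ --- ordered by the restriction of $\preceq$. I would define a map $\phi \colon {\cal L} \to {\cal J}({\cal P})$ by $\phi(x) = \{\, p \in {\cal P} : p \preceq x \,\}$ and aim to show it is a lattice isomorphism. That $\phi(x)$ is an ideal of ${\cal P}$ is immediate, and one direction of order-preservation (if $x \preceq y$ then $\phi(x) \subseteq \phi(y)$) is trivial.

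The two technical facts that drive everything are: (i) every element is the join of the join-irreducibles below it, i.e.\ $x = \bigvee \phi(x)$; and (ii) in a distributive lattice each join-irreducible $p$ is \emph{join-prime}, meaning that $p \preceq a \vee b$ forces $p \preceq a$ or $p \preceq b$, which then extends to arbitrary finite joins by induction. Fact (i) is where the finite-chain hypothesis enters: I would argue by well-founded induction downward from the maximum, showing that if $x$ is not the join of the join-irreducibles strictly below it, then $x$ must itself be join-irreducible, which closes the induction. Granting (i), injectivity of $\phi$ is immediate, since $\phi(x) = \phi(y)$ forces $x = \bigvee \phi(x) = \bigvee \phi(y) = y$, and the reverse order implication $\phi(x) \subseteq \phi(y) \Rightarrow x \preceq y$ follows by taking joins. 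For surjectivity, given an ideal $J$ I would set $x = \bigvee J$ and verify $\phi(x) = J$: the inclusion $J \subseteq \phi(x)$ is clear, while any join-irreducible $p \preceq x = \bigvee J$ satisfies $p \preceq q$ for some $q \in J$ by fact (ii), whence $p \in J$ because $J$ is an ideal.

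The hard part will be fact (ii), the join-primeness of join-irreducibles, because this is the one place where distributivity is used in an essential way. The key computation is that $p \preceq a \vee b$ gives $p = p \wedge (a \vee b) = (p \wedge a) \vee (p \wedge b)$ by the distributive law, and since $p \wedge a \preceq p$ and $p \wedge b \preceq p$, join-irreducibility of $p$ forbids both meets from being strictly below $p$; hence one of them equals $p$, giving $p \preceq a$ or $p \preceq b$. Once this is in place, together with fact (i) all the remaining verifications are formal, and the isomorphism $\phi$ yields the representation.
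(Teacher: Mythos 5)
The paper offers no proof of this lemma: it is stated as ``a simpler version of the Birkhoff representation theorem'' and used as a known classical fact, so there is no in-paper argument to compare against. Your proposal is the standard proof of Birkhoff's theorem for lattices of finite length, and it is essentially correct: the easy direction via union and intersection of ideals, and the converse via the poset ${\cal P}$ of join-irreducibles, the map $\phi(x) = \{p \in {\cal P} : p \preceq x\}$, and the two driving facts (every element is the join of the join-irreducibles below it; join-irreducibility implies join-primeness under distributivity, via $p = p \wedge (a \vee b) = (p\wedge a)\vee(p\wedge b)$). Two small points of bookkeeping to fix when writing it out. First, the induction establishing fact (i) must be well-founded induction with respect to $\prec$ itself, i.e.\ you assume the claim for all elements \emph{strictly below} $x$ (this is what you need when you write a non-join-irreducible $x$ as $a \vee b$ with $a,b \prec x$); the phrase ``downward from the maximum'' points the induction the wrong way, although the dichotomy you state is exactly the right one, and the paper's finite-maximal-chain hypothesis supplies the descending chain condition that makes the induction legitimate. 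Second, in the surjectivity step you form $\bigvee J$ for an ideal $J$; to know this join exists, that join-primeness extends to it, and to match the paper's convention that ${\cal J}({\cal P})$ is built from a \emph{finite} poset, you should note that a distributive lattice of finite length has only finitely many join-irreducibles. Neither point is a gap in the idea, only in the write-up.
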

A lattice ${\cal L}$ is called {\em modular} if 
for every triple $x,a,b$ of elements with $x \preceq b$, 
it holds $x \wedge (a \vee b) = (x \vee a) \wedge b$.
It is known that a modular lattice is exactly such 
a lattice that satisfies
\[
r(p) + r(q) = r(p \wedge q) + r(p \vee q) \quad (p,q \in {\cal L}).
\]
A canonical example of a modular lattice is 
the family ${\cal U}$ of all subspaces of a vector space $U$, 
where the partial order is the inclusion order.
For two subspaces $X,Y$, 
the meet $X \wedge Y$ is equal to the intersection $X \cap Y$, and
the join $X \vee Y$ is equal to the sum $X+Y$.
The rank of $X$ is equal to the dimension $\dim X$.
The following equality of dimension is well-known:
\begin{equation}\label{eqn:dim}
\dim X+ \dim Y = \dim (X \cap Y) + \dim (X+Y) \quad (X,Y \in {\cal U}).
\end{equation}

\subsection{Matroid}\label{subsec:matroid}
A {\em matroid} ${\bf M} = (V,{\cal I})$ is 
a pair of a finite set $V$
and a family ${\cal I}$ of subsets of $V$ 
such that $\emptyset  \in {\cal I}$, $J \subseteq I \in {\cal I}$ 
implies $J \in {\cal I}$,  and for $I,I' \in {\cal I}$ with $|I| < |I'|$
there is $e \in I' \setminus I$ such that $I \cup \{e\} \in {\cal I}$. 
Here $V$ is called the {\em ground set}, and a member of ${\cal I}$ is called 
an {\em independent set}. 
The {\em rank function} $\rho$ is defined by 
$\rho(X) := \max\{ |I| \mid I \in {\cal I}: I \subseteq X\}$.
The {\em closure operator} ${\rm cl}$ is defined by $
{\rm cl}(X) = \{e \in V \mid \rho(\{e\} \cup X) = \rho(X) \}$.
The {\em direct sum} of two matroids ${\bf M} = (V,{\cal I})$ and 
${\bf M}' = (V', {\cal I}')$ is the matroid such that 
the ground set is the disjoint union $V \cup V'$ and the independent sets 
are $I \cup I'$ for $I \in {\cal I}$ and $I' \in {\cal I}'$.

Let $U$ be a finite-dimensional vector space.
For a finite set $V$ of vectors in $U$ and the family ${\cal I}$ of all linearly independent subsets of $V$, the pair $(V, {\cal I})$ is a matroid. 
We will use a dual construction.
Let $\varPi$ be a finite subset of hyperplanes ($(\dim U-1)$-dimensional subspaces) of $U$
and let ${\cal I}$ be the set of all subsets $H \subseteq \varPi$ 
such that $|H|$ is equal to $\dim U$ minus the dimension of the intersection of hyperplanes in $H$. 
Then $(\varPi, {\cal I})$ is a matroid to be denoted by ${\bf M}(\varPi)$. 
\paragraph{Independent matching problem.}
Following \cite[Section 2.3.5]{MurotaBook}, 
we introduce the independent matching problem.
Let $G = (V^{+}, V^-,E)$ be a bipartite graph 
on vertex set $V^+ \cup V^-$ $(V^+ \cap V^- = \emptyset)$
and edge set $E \subseteq V^+ \times V^-$.
Let ${\bf M}^+$ be a matroid on ground set $V^+$, and ${\bf M}^-$ be a matroid on ground set $V^-$.
The rank functions of ${\bf M}^+$ and ${\bf M}^-$ are denoted by $\rho^+$ and $\rho^-$, respectively.
The closure operators of ${\bf M}^+$ and ${\bf M}^-$ are denoted by
 ${\rm cl}^+$ and ${\rm cl}^-$, respectively.
For an edge subset $F$, let $\partial^+ F$ denote the set of vertices in $V^+$ incident to $F$, and let $\partial^- F$
denote the set of vertices in $V^-$ incident to $F$.
A {\em matching} is an edge subset $M$ with $|M| = |\partial^+ M| = | \partial^- M|$.
A matching $M$ is said to be {\em independent} if 
$\partial^+ M$ is independent in ${\bf M}^+$ 
and $\partial^- M$ is independent in ${\bf M}^-$.
The {\em independent matching problem} 
on $({\bf M}^+, {\bf M}^-,G)$ is the problem of finding 
a matching $M$ of the maximum size. 
The following min-max is a reformulation of Edmonds' matroid intersection theorem, which was obtained by Brualdi~\cite{Brualdi70}:
\begin{Thm}[{\cite{Brualdi70}; see \cite[Theorem 2.3.27]{MurotaBook}}]\label{thm:indep}
	The maximum size of an independent matching is equal to the minimum of
	$
	\rho^+(H) + \rho^-(K)
	$
	over all covers $(H,K)$.
\end{Thm}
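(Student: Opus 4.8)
The plan is to prove the two inequalities separately: the direction $\text{max}\le\text{min}$ by a direct counting argument, and the direction $\text{max}\ge\text{min}$ by reducing to Edmonds' matroid intersection theorem. Throughout, a \emph{cover} is a pair $(H,K)$ with $H\subseteq V^+$ and $K\subseteq V^-$ such that every edge of $E$ has its $V^+$-endpoint in $H$ or its $V^-$-endpoint in $K$ (equivalently, no edge joins $V^+\setminus H$ to $V^-\setminus K$).

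First I would establish weak duality. Let $M$ be an independent matching and $(H,K)$ a cover. Split $M = M_H \cup M_K$, where $M_H$ consists of the edges whose $V^+$-endpoint lies in $H$ and $M_K = M\setminus M_H$; since $(H,K)$ is a cover, every edge of $M_K$ has its $V^-$-endpoint in $K$. Because $M$ is a matching, $|M_H| = |\partial^+ M_H|$, and $\partial^+ M_H \subseteq H$ is independent in ${\bf M}^+$ (being a subset of the independent set $\partial^+ M$), so $|M_H| \le \rho^+(H)$; symmetrically $|M_K| \le \rho^-(K)$. Adding the two gives $|M| \le \rho^+(H) + \rho^-(K)$, as required.

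For the reverse inequality I would recast the problem as a matroid intersection on the ground set $E$. On $E$ define the matroid ${\bf N}^+$ induced from ${\bf M}^+$ through the incidence map $\partial^+$: a set $F\subseteq E$ is independent in ${\bf N}^+$ exactly when $\partial^+$ is injective on $F$ and $\partial^+ F$ is independent in ${\bf M}^+$; define ${\bf N}^-$ symmetrically using $\partial^-$. By construction $F$ is independent in both ${\bf N}^+$ and ${\bf N}^-$ precisely when $F$ is a matching with $\partial^+ F$ independent in ${\bf M}^+$ and $\partial^- F$ independent in ${\bf M}^-$, i.e. precisely when $F$ is an independent matching. Hence the maximum size of an independent matching equals the maximum size of a common independent set of ${\bf N}^+$ and ${\bf N}^-$, and Edmonds' matroid intersection theorem yields
\[
\max |F| = \min_{E_1 \cup E_2 = E}\bigl( \rho_N^+(E_1) + \rho_N^-(E_2) \bigr),
\]
the minimum taken over partitions of $E$, where $\rho_N^+,\rho_N^-$ are the rank functions of ${\bf N}^+,{\bf N}^-$.

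It then remains to rewrite this minimum in terms of covers, which I expect to be the main technical point. Here I would invoke the standard rank formula for an induced matroid, $\rho_N^+(E_1) = \min_{H\subseteq V^+}\bigl( \rho^+(H) + |\{ e\in E_1 : \partial^+ e \notin H \}| \bigr)$, and likewise for $\rho_N^-$. Substituting and first minimizing over the partition $E_1\cup E_2 = E$ for fixed $H,K$, each edge is assigned to the cheaper side, paying $0$ when it is covered by $(H,K)$ and $1$ otherwise; thus the whole expression becomes $\min_{H,K}\bigl( \rho^+(H) + \rho^-(K) + u(H,K) \bigr)$, where $u(H,K)$ is the number of edges left uncovered by $(H,K)$. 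Finally, if some edge $(u,v)$ is uncovered I would adjoin $u$ to $H$: this raises $\rho^+(H)$ by at most $1$ while dropping $u(H,K)$ by at least $1$, so the value does not increase, and iterating shows the optimal pair may be taken to be a genuine cover with $u(H,K)=0$. This identifies the minimum with $\min_{(H,K)\,\text{cover}}\bigl( \rho^+(H) + \rho^-(K) \bigr)$ and completes the proof. The one place to be careful is the induced-matroid rank formula together with the edge-by-edge optimization over the partition; alternatively, if one prefers not to cite Edmonds' theorem, the reverse inequality can be obtained directly by an augmenting-path argument that produces simultaneously an optimal independent matching and a tight cover.
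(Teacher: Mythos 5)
Your proof is correct. Note that the paper offers no proof of this theorem at all --- it is imported from Brualdi and from Murota's book --- so the only comparison available is with the route implicit in the surrounding machinery: the paper's lemma on the auxiliary digraph $\tilde G_M$ and the Tomizawa--Iri algorithm embody the augmenting-path proof, in which one shows that when no directed path from $S$ to $T$ exists, the set $C$ of reachable vertices produces a cover $(V^+\setminus C,\, V^-\cap C)$ whose rank sum equals $|M|$. You instead reduce to Edmonds' matroid intersection theorem by pulling ${\bf M}^+$ and ${\bf M}^-$ back to the edge set through $\partial^+$ and $\partial^-$, and both halves of your argument check out: the split $M=M_H\cup M_K$ for weak duality, and the conversion of $\min_{E_1}\bigl(\rho^+_N(E_1)+\rho^-_N(E\setminus E_1)\bigr)$ into $\min_{H,K}\bigl(\rho^+(H)+\rho^-(K)+u(H,K)\bigr)$ via the induced-matroid rank formula, followed by absorbing each uncovered edge into $H$ at zero net cost. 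Your route is shorter and cleanly modular, but it leans on two further classical facts (Edmonds' theorem and the rank formula for a matroid induced through a map, i.e.\ Rado/Perfect), and it does not by itself yield the description of \emph{all} minimum covers in terms of reachable sets of $\tilde G_M$ --- which is exactly what the paper later needs in order to enumerate ${\cal S}_{\max}$ and build a maximal chain; the augmenting-path proof delivers that structure as a by-product.
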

Here a {\em cover} $(H,K)$ is a pair of $H \subseteq V^+$ and $K \subseteq V^-$ 
such that every edge meets $H \cup K$.
A cover $(H,K)$ attaining the minimum of $\rho^+(H)+ \rho^-(K)$ is said to be {\em minimum}.

We will use an algorithm to solve 
the independent matching problem and related structures. 
For an independent matching $M$, the {\em auxiliary digraph} $\tilde G_{M}$ is obtained from $G$ as follows.
Orient each edge in $G$ from $V^+$ to $V^-$.
For each $(\pi,\sigma) \in M$, 
add a directed edge from $\sigma$ to $\pi$.
For $\pi' \in \partial^+ M$ and 
$\pi'' \in  {\rm cl}^+(\partial^+ M) \setminus \partial^+ M$,
add a directed edge from $\pi'$ to $\pi''$ 
if $\partial^+ M \cup \{\pi''\} \setminus \{\pi'\}$ 
is independent in ${\bf M}^+$.
For $\sigma' \in {\rm cl}^-(\partial^- M) \setminus \partial^- M$ and 
$\sigma'' \in \partial^- M$, add a directed edge from $\sigma'$ to $\sigma''$  
if $\partial^- M \cup \{\sigma'\} \setminus \{\sigma''\}$
is independent in ${\bf M}^-$.
Let $S := V^+ \setminus {\rm cl}^+ (\partial^+ M)$ and
$T:= V^- \setminus {\rm cl}^- (\partial^- M)$.
\begin{Lem}[{see \cite[Lemma 2.3.32, Theorem 2.3.33, Lemma 2.3.35]{MurotaBook}}]\label{lem:indep}
	\begin{itemize}
	\item[{\rm (1)}] 
	An independent matching $M$ is maximum if and only if there is no directed path 
	in $\tilde G_M$ from $S$ to $T$.
	\item[{\rm (2)}] A cover $(H,K)$ is minimum if and only if 
	it is represented as $(H,K) = (V^+ \setminus C, V^- \cap C)$
	for a vertex subset $C$
	such  that 
	$S\subseteq C$, $T \cap C = \emptyset$, and
	no edge in $\tilde G_{M}$ goes out from $C$.
	\item[{\rm (3)}] For a maximum independent matching $M$ and 
	a minimum cover $(H,K)$, it holds that $\rho^+(H) = |H \cap \partial^+ M|$ and $\rho^-(K) = |K \cap \partial^- M|$.
\end{itemize}
\end{Lem}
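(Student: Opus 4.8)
Since the three parts constitute the standard structural theory of the independent matching problem, the plan is to organize the proof around a single rank identity and prove the parts in the order (3), (2), (1), so that the only genuinely combinatorial ingredient --- an augmentation argument --- is isolated at the end. I would first record the weak-duality inequality $|M| \le \rho^+(H) + \rho^-(K)$ for any independent matching $M$ and any cover $(H,K)$: split $M$ into the edges whose $V^+$-endpoint lies in $H$ and the remaining edges, whose $V^-$-endpoint must then lie in $K$ because $(H,K)$ covers; the first class has exactly $|H\cap\partial^+M| \le \rho^+(H)$ edges and the second has at most $|K\cap\partial^-M| \le \rho^-(K)$ edges. When $M$ is maximum and $(H,K)$ is minimum, Theorem~\ref{thm:indep} turns this into an equality, and tracing the two inequalities backwards forces $\rho^+(H)=|H\cap\partial^+M|$ and $\rho^-(K)=|K\cap\partial^-M|$. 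This is exactly part (3), and it needs nothing beyond Theorem~\ref{thm:indep}.

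Next I would prove the \emph{key rank identity}: if $C$ is any vertex set with $S\subseteq C$, $T\cap C=\emptyset$, and no edge of $\tilde G_M$ leaving $C$, and if $H:=V^+\setminus C$ and $K:=V^-\cap C$, then $\rho^+(H)=|H\cap\partial^+M|$ and $\rho^-(K)=|K\cap\partial^-M|$. The inequality $\ge$ is immediate. For $\le$ on the $+$-side, $S\subseteq C$ forces $H\subseteq{\rm cl}^+(\partial^+M)$; were some $\pi''\in H$ outside ${\rm cl}^+(H\cap\partial^+M)$, its fundamental circuit with respect to $\partial^+M$ could not lie inside $(H\cap\partial^+M)\cup\{\pi''\}$, so it would contain some $\pi'\in\partial^+M\cap C$ with $\partial^+M\cup\{\pi''\}\setminus\{\pi'\}$ independent, yielding an exchange arc $\pi'\to\pi''$ that leaves $C$ --- a contradiction. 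Hence $H\subseteq{\rm cl}^+(H\cap\partial^+M)$ and $\rho^+(H)=|H\cap\partial^+M|$; the $-$-side is symmetric.

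For part (2), given such a $C$ I would check that $(H,K)=(V^+\setminus C,V^-\cap C)$ is a cover, since an uncovered edge $\pi\to\sigma$ with $\pi\in C$ and $\sigma\notin C$ would leave $C$; and, because each matched edge contributes both arcs $\pi\to\sigma$ and $\sigma\to\pi$, out-closedness gives $\pi\in C\iff\sigma\in C$ along each matched edge, so exactly one endpoint of each matched edge is counted and $|H\cap\partial^+M|+|K\cap\partial^-M|=|M|$. With the key identity the cover then has value exactly $|M|$ and is minimum by Theorem~\ref{thm:indep}. Conversely, given a minimum cover $(H,K)$ I would set $C:=(V^+\setminus H)\cup K$: part (3) gives $H\subseteq{\rm cl}^+(\partial^+M)$ and $K\subseteq{\rm cl}^-(\partial^-M)$, which yield $S\subseteq C$ and $T\cap C=\emptyset$, and out-closedness is verified arc-type by arc-type --- the original and matching arcs via the cover property and the tightness fact that no matched edge has both endpoints covered, the exchange arcs via the fundamental-circuit containment inside $H$ (resp. $K$) supplied by part (3).

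Finally, for part (1)'s direction ($\Leftarrow$), if no path from $S$ to $T$ exists I would take $C$ to be the set of vertices reachable from $S$; it satisfies the hypotheses of the key identity, so the associated cover has value $|M|$ and $M$ is maximum by Theorem~\ref{thm:indep}. The hard direction is ($\Rightarrow$): assuming a path from $S$ to $T$ exists, I would take a shortest such path $P$ and form $M':=M\triangle P$, claiming it is an independent matching of size $|M|+1$. Showing that $\partial^+M'$ and $\partial^-M'$ stay independent is the crux, and it rests on the no-shortcut property of a shortest alternating path --- the absence of chord arcs in $\tilde G_M$ --- which is exactly what permits repeated application of the matroid exchange axiom along $P$ without destroying independence on either side. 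This augmentation argument (Murota, Lemma~2.3.32 and Theorem~2.3.33) is the one place requiring more than weak duality and bookkeeping, and it is where I expect the real difficulty to lie.
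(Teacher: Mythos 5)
Your proposal is correct in substance, but note first that the paper itself contains no proof of this lemma: it is quoted directly from Murota \cite[Lemma 2.3.32, Theorem 2.3.33, Lemma 2.3.35]{MurotaBook}, so the only meaningful comparison is with that standard treatment, which your plan reconstructs faithfully. I checked the details: the weak-duality count (each matched edge has at least one covered endpoint, and distinct matched edges have distinct endpoints on each side) does yield part (3) from Theorem~\ref{thm:indep}; your key rank identity is sound, since $S \subseteq C$ forces $H \subseteq {\rm cl}^+(\partial^+ M)$, and a fundamental-circuit element $\pi' \in \partial^+ M \cap C$ would indeed create an exchange arc $\pi' \to \pi''$ leaving $C$; and in the converse of (2) all four arc types are correctly disposed of (the matching arcs via the tightness consequence of (3) that no matched edge has both endpoints in the cover, the exchange arcs via $H \subseteq {\rm cl}^+(H \cap \partial^+ M)$ and $K \subseteq {\rm cl}^-(K \cap \partial^- M)$, both of which follow from the rank equalities in (3)). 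One caveat: in (2) the matching $M$ must be maximum for the ``only if'' direction, since your appeal to (3) requires it; this matches Murota's hypotheses and the paper's usage, where $M$ is always the output of the augmenting algorithm.

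The one incomplete step is the direction you flag as the crux, (1)($\Rightarrow$), which you defer to the shortest-augmenting-path, unique-exchange argument. That deferral is avoidable given the order you chose: Theorem~\ref{thm:indep} is already assumed as a black box, and your proof of the converse of (2) uses only that theorem together with (3) --- never augmentation. So if $M$ is maximum, take any minimum cover $(H,K)$; Theorem~\ref{thm:indep} gives $\rho^+(H)+\rho^-(K) = |M|$, your converse of (2) produces $C$ with $S \subseteq C$, $T \cap C = \emptyset$, and no arc of $\tilde G_M$ leaving $C$, and then any directed path starting in $S$ stays inside $C$ and never reaches $T$. With this substitution your argument is complete and self-contained modulo Theorem~\ref{thm:indep}, which is exactly the dependency structure the paper adopts; the genuinely combinatorial augmentation lemma \cite[Lemma 2.3.31]{MurotaBook} is then needed only for the correctness of the Tomizawa--Iri algorithm, not for this lemma.
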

The following algorithm to find a maximum independent matching
is due to Tomizawa and Iri~\cite{TomizawaIri74}; see \cite[p. 89]{MurotaBook}.
\begin{description}
	\item[Algorithm to find a maximum independent matching:] 
	\item[0.] $M := \emptyset$.
	\item[1.] Construct $\tilde G_{M}$. 
	Find a shortest path $P$ from $S$ to $T$ in $\tilde G_{M}$.
	\item[2.] If such a path $P$ does not exist, 
	then $M$ is a maximum independent matching; stop.
	\item[3.] Let $E_{P}$ be the set of edges $(\pi,\sigma)$ in $E$ such that
	$(\pi,\sigma)$ or $(\sigma,\pi)$ belongs to $P$. 
	Let $M := (M \setminus E_P) \cup (E_P \setminus M)$, and go to 1.   
\end{description}
Since $P$ is shortest, $M$ is always an independent matching~\cite[Lemma 2.3.31]{MurotaBook}.
The size of $M$ increases by one in each iteration.
Thus the algorithm terminates after at most $|E|$ iterations.
The algorithm needs some matroid oracle to construct $\tilde G_M$.
In our case,  
${\bf M}^+$ and ${\bf M}^-$
are matroids of linear independence of vectors.
Then ${\rm cl}^+(\partial^+ M)$ and ${\rm cl}^-(\partial^- M)$ as well as
the edges to be added for $\tilde G_M$
are calculated by Gaussian elimination.

%\section{DM-decomposition}\label{sec:DM}
%

\section{Maximum stable subspace problem and\\ DM-decomposition}
\label{sec:DM}
In this section, we introduce DM-decomposition of partitioned matrix.
Our approach takes a form different from the original approach by Ito, Iwata, and Murota~\cite{ItoIwataMurota94}.
It turns out (in Remark~\ref{rem:surplus}) 
that both approaches yields 
the same definition.
Also we do not impose any rank condition 
on our block-triangular form, 
which simplifies the definition of 
the DM-decomposition and guarantees its existence.

We first formulate a vector-space generalization of the stable set problem on a bipartite graph.
Let $U$ and $V$ be finite dimensional vector spaces 
over a field ${\bf F}$.
Let $A: U \times V \to {\bf F}$ be a bilinear form.
Let ${\cal U}$ and ${\cal V}$ be the lattices of all vector subspaces of $U$ and of $V$, respectively. 
Let ${\cal L}$ and ${\cal M}$ be sublattices of ${\cal U}$ and of ${\cal V}$, respectively.
For subspace $(X,Y) \in {\cal L} \times {\cal M}$, 
let $A(X,Y)$ denote the image of $(X,Y)$ by $A$:
\[
A(X,Y) = \{A(x,y) \mid x \in X, y \in Y\}.
\]
Then either $A(X,Y) = \{0\}$ or ${\bf F}$.
A subspace $(X,Y) \in {\cal L} \times {\cal M}$
is said to be {\em stable} if 
$A(X,Y) = \{0\}$.
The {\em maximum stable subspace problem (MSSP)} on  
$({\cal L}, {\cal M}, A)$ is formulated as follows:
\begin{description}
	\item[MSSP:] Find a stable subspace $(X,Y) \in {\cal L} \times {\cal M}$ such that 
	$\dim X + \dim Y$ is maximum.
\end{description}
A stable subspace $(X,Y)$ is said to be {\em maximum} 
if $\dim X + \dim Y$ is maximum among all stable subspaces.
\begin{Lem}\label{lem:submo}
	Let $(X,Y)$ and $(X',Y')$ be stable subspaces.
	\begin{itemize}
		\item[{\rm (1)}]  
		Both $(X \cap X', Y + Y')$ and $(X + X', Y \cap Y')$ are stable.
		\item[{\rm (2)}] If both $(X,Y)$ and $(X',Y')$ are maximum, 
		then both $(X \cap X', Y + Y')$ and $(X + X', Y \cap Y')$ are maximum.
	\end{itemize}
\end{Lem}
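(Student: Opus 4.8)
The plan is to establish (1) by a direct computation with the bilinear form, and then to derive (2) from (1) together with the dimension identity \eqref{eqn:dim} by a standard uncrossing argument.

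For (1), I would first record that the candidate pairs actually lie in ${\cal L} \times {\cal M}$: since ${\cal L}$ and ${\cal M}$ are sublattices, $X \cap X' = X \wedge X'$ and $X + X' = X \vee X'$ belong to ${\cal L}$, and likewise $Y \cap Y', Y + Y' \in {\cal M}$. To verify stability of $(X \cap X', Y + Y')$, take any $x \in X \cap X'$ and any $y \in Y + Y'$, and write $y = y_1 + y_2$ with $y_1 \in Y$ and $y_2 \in Y'$. Bilinearity gives $A(x,y) = A(x,y_1) + A(x,y_2)$, where $A(x,y_1) = 0$ because $x \in X$ and $(X,Y)$ is stable, and $A(x,y_2) = 0$ because $x \in X'$ and $(X',Y')$ is stable. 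Hence $A(x,y) = 0$. The pair $(X + X', Y \cap Y')$ is handled symmetrically, decomposing $x \in X + X'$ across $X$ and $X'$ instead.

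For (2), let $k$ denote the common maximum value $\dim X + \dim Y = \dim X' + \dim Y'$ taken over all stable subspaces. By (1) both $(X \cap X', Y + Y')$ and $(X + X', Y \cap Y')$ are stable, so maximality of $k$ yields $\dim(X \cap X') + \dim(Y + Y') \le k$ and $\dim(X + X') + \dim(Y \cap Y') \le k$. Adding these two inequalities and regrouping the four dimensions into an $X$-side and a $Y$-side, I would apply \eqref{eqn:dim} twice to rewrite $\dim(X \cap X') + \dim(X + X') = \dim X + \dim X'$ and $\dim(Y \cap Y') + \dim(Y + Y') = \dim Y + \dim Y'$; the grand total is then exactly $\dim X + \dim Y + \dim X' + \dim Y' = 2k$. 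Thus the sum of the two $\le k$ inequalities reads $2k \le 2k$, which forces each of them to hold with equality. This is precisely the assertion that both $(X \cap X', Y + Y')$ and $(X + X', Y \cap Y')$ attain the maximum.

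The main obstacle here is conceptual rather than computational: recognizing that the two stability facts from (1), once combined with the \emph{exact} additivity of dimension under meet and join in the modular lattice of subspaces, must collapse into simultaneous equalities. There is no delicate estimate to carry out; the only points requiring care are the bookkeeping of which subspace each vector inhabits during the bilinearity step of (1), and checking that the candidate pairs are legitimate members of ${\cal L} \times {\cal M}$, so that the definition of maximality may be invoked against them.
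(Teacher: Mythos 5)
Your proposal is correct and follows essentially the same route as the paper: part (1) by bilinearity (the paper writes $A(X+X',Y\cap Y') = A(X,Y\cap Y') + A(X',Y\cap Y') = \{0\}$, which is your decomposition argument), and part (2) by combining (1) with the dimension identity \eqref{eqn:dim} in exactly the uncrossing fashion you describe. The paper merely states (2) as an immediate consequence, whereas you spell out the summation-to-$2k$ step explicitly; there is no substantive difference.
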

\begin{proof}
	(1). By $A(X, Y \cap Y') = A(X', Y \cap Y') = \{0\}$,  
	we have $A(X + X', Y \cap Y') = A(X,Y \cap Y') + A(X', Y \cap Y') = \{0\}$. (2) follows from (1) and (\ref{eqn:dim}). 
\end{proof}
A canonical block-triangular matrix representation of $A$ 
is obtained from the family ${\cal S}_{\max} \subseteq {\cal L} \times {\cal M}$ of maximum stable subspaces.
Define a partial order $\preceq$ on ${\cal S}_{\max}$ 
by:  $(X,Y) \preceq (X',Y')$ if $X \subseteq X'$ (and $Y \supseteq Y'$).
Then ${\cal S}_{\max}$ is isomorphic to 
a sublattice of ${\cal L}$, and is a modular lattice.
Consider a maximal chain $(X^0,Y^0) \prec (X^1,Y^1) \prec \cdots \prec (X^h,Y^h)$ of ${\cal S}_{\max}$.
Let $i_k := \dim X^{k}$ and $j_k : = \dim Y^k$ for $k=0, 1,2,\ldots,h$.
Then $0 \leq i_0 < i_1 < i_2 < \cdots < i_h \leq n$, 
and $m \geq j_0 > j_1 > \cdots > j_h \geq 0$.
Since $i_k + j_k$ is the dimension of maximum stable subspaces,
it holds that
\[
i_{k+1} - i_{k} = j_{k} - j_{k+1} \quad (k=0,1,2,\ldots,h-1).
\]
Let $\{ e_1,e_2,\ldots,e_n\}$ be a basis of $U$ 
such that $\{e_1,e_2,\ldots,e_{i_k}\}$ is a basis of $X^{k}$ for $k=0,1,2,\ldots,h$. 
Also, let $\{f_1,f_2,\ldots,f_m\}$ be a basis of $V$
such that $\{f_1,f_2,\ldots, f_{j_k}\}$ is a basis of $Y^k$ for $k=0,1,2,\ldots,h$.

Let $a_{ij} := A(e_{n + 1 - i}, f_j)$ for $i=1,2,\ldots,n$ and $j=1,2,\ldots,m$. 
Consider $n \times m$ matrix $A_{\rm DM} = (a_{ij})$, which is the matrix representation of $A$ with respect to 
the basis $\{e_i\}$ and $\{f_j\}$.
For $k = 0,1,2,\ldots,h$, 
let $D_{k}$ denote the submatrix of $A_{\rm DM}$
consisting of $A(e_{i},f_{j})$ 
for $i_{k-1} < i \leq i_{k}$ and $j_{k}  < j  \leq j_{k-1}$, 
where $i_{-1} := 0$, $j_{-1} := m$.
Let $D_{\infty}$ denote the submatrix 
consisting of $A(e_{i},f_{j})$
for $i_{h} < i \leq n$ and $1 \leq j < j_h$.
Notice that $D_{0}$ and $D_{\infty}$ 
may be empty matrices.
Since $A(e_{i},f_{j}) = 0$ 
for $i \leq i_k, j \leq j_l$ with $k \leq l$,
the matrix representation $A_{\rm DM}$ of $A$ is in a block-triangular form as follows:
\begin{equation*}
A_{\rm DM} =
\left( 
\begin{array}{ccccc}
D_{ \infty} & & & & \\
 O & D_{h} & & & \\ 
 O & O  & \ddots & &  \\
 \vdots & \vdots & \ddots  & \ D_{1} & \\[0.7em]
 O      & O      & \cdots & O & D_{0}  
\end{array}
\right).
\end{equation*}
This matrix representation $A_{\rm DM}$ of bilinear form $A$ is called the {\em DM-representation with respect to} $({\cal L}, {\cal M})$.
Here the diagonal block $D_{k}$ is a square matrix 
of size $d_k := i_{k+1} - i_{k} = j_{k} - j_{k+1}$
for $k=1,2,\ldots,h$. 
In the case of $V \in {\cal M}$, 
$D_{0}$ is the empty matrix if and only if 
$(X^0,Y^0) = (0,V)$, i.e., $i_k+j_k = m$.
Consequently,
if $D_0$ is nonempty,  
then $m < i_0 + j_0$ and 
the column size of $D_{0}$ is less than the row size. 
Similarly, in the case of $U \in {\cal L}$, 
the row size of $D_{\infty}$ 
is less than its column size when $D_{\infty}$ is nonempty.

Although there are 
degrees of freedom in defining the entries of $A_{\rm DM}$,
its block structure is uniquely determined in the following sense.
Obviously the sizes of $D_{0}$ and $D_{\infty}$ are 
independent of the choice of a maximal chain.
Moreover the size sequence $(d_1,d_2,\ldots,d_h)$ is 
uniquely-determined up to permutation. 
This is a consequence of the Jordan-H\"older theorem.
See~\cite{ItoIwataMurota94} or \cite[Section 4.8]{MurotaBook} for 
more details on the uniqueness. 

It should be noted that 
Ito, Iwata, and Murota~\cite{ItoIwataMurota94}
focus on a block-triangular form, called a 
{\em proper block-triangular} form,
satisfying a more stronger rank condition: $\rank D_0 =  m - j_0$,  
$\rank D_{\infty} = n - i_h$, and
$\rank D_k = d_k$ for 
$k=1,2,\ldots,h$.
A proper block-triangular form does not exist in general.
If it exists, then it is obtained 
from a chain of maximum stable subspaces as above;  
see also Remark~\ref{rem:surplus}.
%This is why we focus on always existent

Let us return to the case of partitioned matrices.
Let $A = (A_{\alpha \beta})$ be 
an $n \times m$ partitioned matrix of the type $(n_1,n_2,\ldots,n_\mu; m_1,m_2,\ldots,m_\nu)$,  
where the entries of $A$ are numbers in a field ${\bf F}$. 
Let $U = {\bf F}^{n}$ 
and $V = {\bf F}^{m}$, where vectors in $U$ and $V$ 
are treated as column vectors.
Regard $A$ as a bilinear form on $U \times V $ by 
\[
(x,y) \mapsto x^{\top} A y.
\]

For $\alpha=1,2,\ldots,\mu$,
let $U_\alpha$ be the subspace of $U$ consisting of  
$u = (u_1,u_2,\ldots,u_n)^{\top}$ such that 
$u_i = 0$ unless $\sum_{\kappa=1}^{\alpha-1} n_\kappa < i \leq \sum_{\kappa=1}^{\alpha} n_\kappa$. 
Also, for $\beta =1,2,\ldots,\nu$,
let $V_\beta$ be the subspace of $V$ consisting of  
$v = (v_1,v_2,\ldots,v_m)^{\top}$ such that 
$v_j = 0$ unless $\sum_{\kappa=1}^{\beta-1} m_\kappa < j \leq \sum_{\kappa=1}^{\beta} m_\kappa$. 
Then $U = U_1 \oplus U_2 \oplus \cdots \oplus U_\mu$ 
and $V = V_1 \oplus V_2 \oplus \cdots \oplus V_\nu$.
Let ${\cal L}$ be the lattice of all subspaces $X$ of $U$ represented 
as $X = X_1 \oplus X_2 \oplus \cdots \oplus X_\mu$, 
where $X_\alpha$ is a subspace of $U_{\alpha}$ 
for $\alpha = 1,2,\ldots,\mu$.
Let ${\cal M}$ be the lattice of all subspaces $Y$ of $V$ represented 
as $Y = Y_1 \oplus Y_2 \oplus \cdots \oplus Y_\nu$,
where $Y_\beta$ 
is a subspace of $V_\beta$ for $\beta = 1,2,\ldots,\nu$.
For $X \in {\cal L}$ and $Y \in {\cal M}$, such representations are  unique, since $X_{\alpha} = X \cap U_{\alpha}$ and $Y_{\beta} = Y \cap V_{\beta}$.
Now $A_{\alpha \beta}$ is viewed as 
a bilinear form on $U_{\alpha} \times  V_{\beta}$.
Then the subspace $(X,Y) \in {\cal L} \times {\cal M}$ is stable if and only if 
\[
A_{\alpha \beta}(X_\alpha, Y_{\beta}) = \{0\}
\]
for $\alpha=1,2,\ldots,\mu$ and $\beta = 1,2,\ldots,\nu$.

Consider the DM-representation $A_{\rm DM}$ of $A$ with respect to 
$({\cal L}, {\cal M})$.
As above, choose a maximal chain
$(X^0,Y^0) \prec (X^1,Y^1) \prec \cdots \prec (X^h,Y^h)$ and bases $\{e_1,e_2,\ldots, e_n\}$ and $\{f_1,f_2,\ldots,f_m\}$, 
where these bases are chosen so that
$\{e_1,e_2,\ldots,e_n\} \cap U_{\alpha}$ is a basis of $U_{\alpha}$ for $\alpha=1,2,\ldots,\mu$, and
$\{f_1,f_2,\ldots,f_m\} \cap V_{\beta}$ 
is a basis of $V_{\beta}$ for $\beta = 1,2,\ldots,\nu$.

Let $E := (e_n \ e_{n-1} \ \cdots \ e_1)$ be the nonsingular $n \times n$ 
matrix of column vectors $e_i$, 
and let $F := (f_1 \ f_2 \ \cdots \ f_m)$ be the nonsingular $m \times m$ matrix of column vectors $f_j$. 
Then $A_{\rm DM}$ is given by $E^{\top} A F$, 
and is called the {\em DM-decomposition of $A$}.
Notice that $A \mapsto E^{\top} A F$ is 
an admissible transformation in the introduction.
Indeed, $E$ and $F$ are represented as
\[
E = 
 \left(
\begin{array}{cccc}
	E_1 & O  & \cdots & O  \\
	O & E_2 & \ddots & \vdots \\
	\vdots & \ddots & \ddots &   O\\
	O & \cdots & O& E_{\mu}
\end{array}
\right) P, \quad  F= 
\left(
\begin{array}{cccc}
	F_1 & O  & \cdots & O  \\
	O & F_2 & \ddots & \vdots \\
	\vdots & \ddots & \ddots &   O\\
	O & \cdots & O& F_{\nu}
\end{array}
\right)
Q,
\]
where $E_\alpha$ is a nonsingular $n_\alpha \times n_{\alpha}$ matrix, 
$F_{\beta}$ is a nonsingular $m_{\beta} \times m_{\beta}$ matrix, and
 $P$ and $Q$ are permutation matrices of sizes $n$ and $m$, respectively.
\begin{Rem}[Submodularity]
\label{rem:submo}
Let ${\cal S} \subseteq {\cal L} \times {\cal M}$ 
be the family of all stable subspaces.
By Lemma~\ref{lem:submo}~(1), 
${\cal S}$ is a modular lattice with 
join $(X,Y) \vee (X,Y') = (X+X',Y\cap Y')$ and 
meet $(X,Y) \wedge (X',Y') = (X \cap X', Y + Y')$. 
Then (MSSP) is viewed as a {\em modular function} maximization 
over ${\cal S}$. Indeed, by (\ref{eqn:dim}),  
the objective function $v(X,Y) := \dim X + \dim Y$ of (MSSP) 
satisfies the modular equality
\[
v(X,Y) + v(X',Y') = v((X,Y) \wedge (X',Y')) + v((X,Y) \vee (X',Y')) \quad ((X,Y),(X',Y') \in {\cal S}).
\]
Also (MSSP) can be formulated as 
a supermodular function maximization on the modular lattice ${\cal M}$.
For $Y \in {\cal M}$, 
the family of $X \in {\cal L}$ 
with $(X,Y) \in {\cal S}$
forms a sublattice of ${\cal L}$.
Let $Y^{\bot}$ denote
the maximum of this sublattice 
($\simeq$ the subspace in ${\cal L}$ orthogonal to $Y$ with respect to $A$).
Then (MSSP) is equivalent to: 
\begin{description}
	\item[MSSP$'$:] Find $Y \in {\cal M}$ 
	such that $\dim Y + \dim Y^\bot$ is maximum.
\end{description}
This is a supermodular function maximization over ${\cal M}$.
Indeed, the function $\gamma$ on ${\cal M}$ defined 
by $Y \mapsto \dim Y + \dim Y^{\bot}$ is {\em supermodular:}
\begin{equation*}
\gamma(Y) + \gamma (Y') \leq \gamma(Y + Y') + \gamma(Y \cap Y')
\quad (Y,Y' \in {\cal M}). 
\end{equation*}
This immediately follows from (\ref{eqn:dim}) and
$Y^{\bot} + Y'^{\bot} \subseteq (Y \cap Y')^{\bot}$ 
and $Y^{\bot} \cap Y'^{\bot} = (Y + Y')^{\bot}$.
\end{Rem}
\begin{Rem}[Relation to the original approach]\label{rem:surplus}
In the case of a partitioned matrix, 
the function $\gamma$ is equal to 
$n$ minus the {\em surplus function} by Ito, Iwata, and Murota~\cite{ItoIwataMurota94}, where
they considered a DM-type decomposition obtained from
a maximal chain of the minimizer set of the surplus function.
In their formulation, a partitioned matrix $A$ is viewed 
as a linear map $V \to U^* = U^*_1 \oplus U^*_2 \oplus \cdots \oplus U^*_\mu$. 
The surplus function $p$ on ${\cal M}$ is defined by
$Y \mapsto \sum_{\alpha} \dim {\rm Pr}_{\alpha} A(Y) - \dim Y$, where ${\rm Pr}_{\alpha}: U^{*} \to U^{*}_{\alpha}$ is the projection.
Observe that $(Y^\bot)_{\alpha}$ is 
the orthogonal space of $ {\rm Pr}_{\alpha} A(Y)$. 
Thus $\dim Y^{\bot} = \sum_{\alpha} \dim (Y^{\bot})_{\alpha} 
= \sum_{\alpha} n_\alpha - \dim {\rm Pr}_{\alpha} A(Y)$, 
and $\gamma = n - p$.
Consequently, our definition of the DM-decomposition is 
the same as the one in~\cite{ItoIwataMurota94}.
\end{Rem}
\begin{Rem}[VCSP perspective]	
We see in the next section that (MSSP) 
for our special case is solvable in polynomial time, 
though we do not know whether (MSSP) is tractable in general.
We here remark another nontrivial tractable case.
Let ${\cal U}_{\alpha}$ and ${\cal V}_{\beta}$ be the lattices of all vector subspaces of $U_{\alpha}$ and of $V_{\beta}$, respectively.
Then ${\cal L}$ is isomorphic to 
${\cal U}_1 \times {\cal U}_2 \times \cdots \times {\cal U}_{\mu}$,
and ${\cal M}$ is isomorphic to 
${\cal V}_1 \times {\cal V}_2 \times \cdots \times {\cal V}_{\nu}$.
Consider the case where  ${\bf F}$ is a finite field, and
$|{\cal U}_{\alpha}|$ and $|{\cal V}_{\beta}|$ are constants for all $\alpha, \beta$.
An example is: 
${\bf F} = {\rm GF}(2)$,
$n_\alpha = m_{\beta} = 2$, 
and $|{\cal U}_{\alpha}| = |{\cal V}_{\beta}| = 5$ for each $\alpha, \beta$.
In this case, (MSSP) is viewed as 
a {\em valued constraint satisfaction problem~(VCSP)}; see \cite{KTZ13}.
Define $S_{\alpha \beta}: {\cal U}_{\alpha} \times {\cal V}_{\beta}  \to \{0, - \infty \}$ by $S_{\alpha \beta}(X_{\alpha},Y_{\beta}) := 0$ if $A_{\alpha \beta}(X_{\alpha},Y_{\beta}) = \{0\}$ and $- \infty$ otherwise. 
Then (MSSP) can be formulated as
\begin{eqnarray*}
{\rm Max.}  && \sum_{\alpha} \dim X_{\alpha} + \sum_{\beta} \dim Y_{\beta} + \sum_{\alpha, \beta} S_{\alpha \beta} (X_{\beta}, Y_{\beta}) \\
{\rm s.t.} && (X_{1},X_{2},\ldots,X_{\mu}) \in {\cal U}_1 \times {\cal U}_2 \times \cdots \times {\cal U}_{\mu}, \\
&& (Y_{1},Y_{2},\ldots,Y_{\nu}) \in {\cal V}_1 \times {\cal V}_2 \times \cdots \times {\cal V}_\nu.
\end{eqnarray*}
This is a binary VCSP, 
where the input is the table of all function values of 
$\dim $ and $S_{\alpha \beta}$, and the total size is $O(nm)$.
By Remark~\ref{rem:submo}, 
this is a submodular VCSP, and hence
it admits a {\em fractional polymorphism} corresponding 
to the submodularity inequality.
Therefore, by the result of~\cite{KTZ13}, 
the {\em basic LP relaxation} is exact, and (MSSP) can be solved in polynomial time. Based on this, 
\cite{NakashimaHirai17} gave a polynomial time algorithm to compute the DM-decomposition in this setting.
\end{Rem}

\begin{Rem}[Rank of $A$]
Let $v^*$ be the optimal value of (MSSP).
The quantity $n + m - v^*$ 
is an upper bound of the rank of $A$; see~\cite{Lovasz89}.
Of course, this bound is far from being tight; 
the tight case is exactly the case 
where the obtained DM-decomposition
is a proper block-triangular form.
A natural situation for this bound to be effective is the case where
entries of distinct submatrices have no algebraic relations.
A {\em generic partitioned matrix}~\cite{IwataMurota95} 
is a notion to capture this situation, and
is a partitioned matrix $(A_{\alpha \beta})$ (over a field ${\bf F}$) 
such that
$A_{\alpha \beta}$ is represented as $t_{\alpha \beta} B_{\alpha \beta}$, 
where
$B_{\alpha \beta}$ is a matrix over a subfield ${\bf K}$ of ${\bf F}$
and elements $t_{\alpha \beta}$ 
are algebraically independent over ${\bf K}$.
For a multilayered mixed matrix $A$ 
(a generic partitioned matrix $A$ of type $(n_1,n_2,\ldots,n_{\mu};1,1,\cdots,1)$), 
the CCF theory~\cite{MurotaBook,MurotaIriNakamura87} 
implies that the bound $n+m-v^*$ 
is equal to $\rank A$.
This is also in the case for our rank-$1$ situation by~\cite[Theorem 1$^*$]{Lovasz89}.

Iwata and Murota~\cite{IwataMurota95} 
proved that this equality holds 
for any generic partitioned matrix 
with $n_{\alpha} \leq 2, m_{\beta} \leq 2$.
They also presented an example of
a $6 \times 6$ generic partitioned matrix of type 
$(3,3;2,2,2)$ not having this property.
This example consists of rank-$2$ submatrices. 
\end{Rem}
\section{DM-decomposition of a partitioned matrix\\ with rank-1 blocks}\label{sec:DM-rank1}

Let $A = (A_{\alpha \beta})$ be an $n \times m$ partitioned matrix of type $(n_1,n_2,\ldots,n_\mu; m_1,m_2,\ldots,m_{\nu})$. 
Suppose now that 
\begin{description}
	\item[(rank-1 condition)] $\rank A_{\alpha \beta} \leq 1$ for $\alpha =1,2,\ldots,\mu$, $\beta = 1,2,\ldots,\nu$.
\end{description}
In the following,  
index $\alpha$ ranges over $1,2,\ldots,\mu$ and 
$\beta$ ranges over $1,2,\ldots,\nu$.

\subsection{Reduction}
Here we show that (MSSP) under the rank-1 condition is reduced 
to an independent matching problem.
The reduction is based on
an intuitively obvious fact (Lemmas~\ref{lem:rank1} and \ref{lem:cover}) 
that for any stable subspace $(X,Y)$,  
the component $X_{\alpha}$ (resp. $Y_{\beta}$) belongs to 
the intersection of 
$\ker A^{\top}_{\alpha \beta}$ (resp. $\ker A_{\alpha \beta}$)
for several $\beta$ (resp. ${\alpha}$).

Let $\varPi_\alpha$ be the set of 
hyperplanes $\pi$ in $U_\alpha$
such that $\pi = \ker A^{\top}_{\alpha \beta}$ 
for some $\beta$ (with $\rank A_{\alpha \beta} = 1$).
Also, let $\varSigma_\beta$ be the set of 
hyperplanes $\sigma$ in $V_\beta$
such that $\sigma = \ker A_{\alpha \beta}$ for some $\alpha$.
Let $\varPi$ be the disjoint union of $\varPi_\alpha$ over $\alpha$, and
let $\varSigma$ be the disjoint union of $\varSigma_\beta$ over $\beta$. 
Define bipartite graph $G = (\varPi,\varSigma, E)$  
on vertex set $\varPi \cup \varSigma$, where
for each $\alpha,\beta$,
\begin{itemize}
	\item[] $(\pi,\sigma) \in \varPi_{\alpha} \times \varSigma_{\beta}$
	is an edge in $E$ if and only if 
	$\pi = \ker A_{\alpha \beta}^{\top}$ and $\sigma = \ker A_{\alpha \beta}$. 
\end{itemize}
An example is given soon (Example~\ref{ex:1}).

For $H \subseteq \varPi$, define subspace $X(H) = X(H)_1 \oplus X(H)_2 \oplus \cdots \oplus X(H)_{\mu}$ of ${\cal L}$ by
$X(H)_\alpha :=$ 
the intersection of all hyperplanes in $H \cap \varPi_\alpha$.
Similarly,
for $K \subseteq \varSigma$, 
define subspace $Y(K) = Y(K)_{1} \oplus Y(K)_{2} \oplus \cdots \oplus Y(K)_{\nu}$ of ${\cal M}$ by 
$Y(K)_\beta :=$ the intersection of
all hyperplanes in $K \cap \varSigma_\beta$.

Consider matroids ${\bf M}(\varPi_\alpha)$ and ${\bf M}(\varSigma_\beta)$
defined in Section~\ref{subsec:matroid}.
Let ${\bf M}(\varPi)$ be 
the direct sum of matroids ${\bf M}(\varPi_\alpha)$ over $\alpha$, and
let ${\bf M}(\varSigma)$ be 
the direct sum of matroids ${\bf M}(\varSigma_\beta)$ over $\beta$.
Consider the independent matching problem on $({\bf M}(\varPi), {\bf M}(\varSigma), G)$.
Then the maximum stable subspace is obtained 
from a minimum cover in $({\bf M}(\varPi), {\bf M}(\varSigma), G)$ as follows.
\begin{Thm}\label{thm:min-max}
	Let $A$ be an $n \times m$ partitioned matrix 
	satisfying the rank-1 condition.
	\begin{itemize}
		\item[{\rm (1)}] A stable subspace $(X,Y)$ is maximum if and only if
		$(X,Y)$ is represented as $(X(H),Y(K))$ for a minimum cover $(H,K)$
		in $({\bf M}(\varPi), {\bf M}(\varSigma), G)$.  
		\item[{\rm (2)}] The maximum dimension of a stable subspace is 
		equal to $n+m$ 
		minus the maximum size of an independent matching in $({\bf M}(\varPi), {\bf M}(\varSigma), G)$. 
	\end{itemize}
\end{Thm}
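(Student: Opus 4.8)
The plan is to exhibit a dimension-preserving correspondence between covers of $G$ and stable subspaces, under which minimum covers match maximum stable subspaces, and then read off (2) from Brualdi's min-max (Theorem~\ref{thm:indep}). Two preliminary facts drive everything. First, a \emph{rank-1 stability criterion}: writing a rank-$1$ block as an outer product $A_{\alpha\beta}=a_{\alpha\beta}b_{\alpha\beta}^{\top}$, one has $x^{\top}A_{\alpha\beta}y=(a_{\alpha\beta}^{\top}x)(b_{\alpha\beta}^{\top}y)$, so $A_{\alpha\beta}(X_\alpha,Y_\beta)=\{0\}$ if and only if $X_\alpha\subseteq\ker A_{\alpha\beta}^{\top}$ or $Y_\beta\subseteq\ker A_{\alpha\beta}$ (if neither holds, choosing $x,y$ on which the two functionals do not vanish gives a nonzero value). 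Second, a \emph{dimension formula}: from the definition of the hyperplane matroid, its rank satisfies $\rho^{+}(H\cap\varPi_\alpha)=n_\alpha-\dim\bigcap_{\pi\in H\cap\varPi_\alpha}\pi=n_\alpha-\dim X(H)_\alpha$; summing over $\alpha$ and using that ${\bf M}(\varPi)$ is a direct sum yields $\dim X(H)=n-\rho^{+}(H)$, and symmetrically $\dim Y(K)=m-\rho^{-}(K)$. Hence $\dim X(H)+\dim Y(K)=n+m-(\rho^{+}(H)+\rho^{-}(K))$.

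Next I would verify the forward direction: for any cover $(H,K)$ the subspace $(X(H),Y(K))$ is stable. Fix a rank-$1$ block $(\alpha,\beta)$ with associated edge $(\pi,\sigma)$. Since $(H,K)$ is a cover, $\pi\in H$ or $\sigma\in K$; in the first case $X(H)_\alpha\subseteq\pi=\ker A_{\alpha\beta}^{\top}$ and in the second $Y(K)_\beta\subseteq\sigma=\ker A_{\alpha\beta}$, so the criterion kills the block. Rank-$0$ blocks are killed automatically. Thus every cover produces a stable subspace, and minimizing $\rho^{+}(H)+\rho^{-}(K)$ over covers produces a stable subspace of dimension $n+m$ minus the maximum independent matching size, by Theorem~\ref{thm:indep}.

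The reverse direction is the crux. Given an arbitrary stable subspace $(X,Y)$, I would form $H:=\{\pi\in\varPi : X_{\alpha}\subseteq\pi\}$ and $K:=\{\sigma\in\varSigma : Y_{\beta}\subseteq\sigma\}$, where $\alpha,\beta$ denote the blocks of the respective hyperplanes. The stability criterion says precisely that for every edge at least one endpoint lies in $H\cup K$, so $(H,K)$ is a cover. The decisive monotonicity is that $X_\alpha\subseteq\pi$ for every $\pi\in H\cap\varPi_\alpha$ forces $X_\alpha\subseteq X(H)_\alpha$, and likewise $Y_\beta\subseteq Y(K)_\beta$; hence $\dim X+\dim Y\le\dim X(H)+\dim Y(K)=n+m-(\rho^{+}(H)+\rho^{-}(K))$. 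Consequently no stable subspace exceeds the cover-derived ones, so the maximum stable dimension equals $\max_{(H,K)}\bigl(\dim X(H)+\dim Y(K)\bigr)=n+m-\min_{(H,K)}(\rho^{+}(H)+\rho^{-}(K))$, which by Theorem~\ref{thm:indep} is $n+m$ minus the maximum independent matching size; this is statement (2).

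For the exact characterization (1), the ``if'' part is immediate: a minimum cover yields a stable $(X(H),Y(K))$ of the maximal dimension just computed. For ``only if'', given a maximum $(X,Y)$ I pass to the cover $(H,K)$ above; maximality collapses the chain $\dim X+\dim Y\le\dim X(H)+\dim Y(K)\le n+m-(\text{max matching})$ to equalities, so $\rho^{+}(H)+\rho^{-}(K)$ equals the minimum cover value and $(H,K)$ is minimum. Because the containments $X_\alpha\subseteq X(H)_\alpha$ and $Y_\beta\subseteq Y(K)_\beta$ are componentwise, equality of the \emph{total} dimension forces $X_\alpha=X(H)_\alpha$ and $Y_\beta=Y(K)_\beta$ for all $\alpha,\beta$, i.e.\ $(X,Y)=(X(H),Y(K))$. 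The main obstacle is exactly this reverse passage: checking that the naturally defined $(H,K)$ is genuinely a cover (where the rank-$1$ criterion is used in full) and exploiting the componentwise monotonicity so that maximality upgrades domination to equality. The remaining content is bookkeeping with the matroid rank formula and an appeal to Theorem~\ref{thm:indep}.
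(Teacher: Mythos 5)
Your proposal is correct and follows essentially the same route as the paper: the rank-1 stability criterion (the paper's Lemma~\ref{lem:rank1}), the cover--stable-subspace correspondence with the codimension formula $\dim X(H)+\dim Y(K)=n+m-(\rho^{+}(H)+\rho^{-}(K))$ (the paper's Lemma~\ref{lem:cover}), and the appeal to Theorem~\ref{thm:indep}. The only cosmetic difference is that you prove the rank-1 criterion via the outer-product factorization $x^{\top}A_{\alpha\beta}y=(a_{\alpha\beta}^{\top}x)(b_{\alpha\beta}^{\top}y)$ rather than the paper's $2\times 2$ determinant argument; both are valid.
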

\begin{Ex}\label{ex:1}
	Consider the following partitioned matrix of type $(2,2,2;2,2,2)$ 
	over field ${\bf F} = {\rm GF}(2)$:
	\[
	A = 
	\left(
	\begin{array}{cc|cc|cc}
	1 & 0 & 1 & 1 & 0 & 0 \\
	0 & 0 & 1 & 1 & 1 & 1 \\
	\hline
	1 & 1 & 1 & 1 & 1 & 0 \\
	0 & 0 & 0 & 0 & 1 & 0 \\
	\hline
	1 & 0 & 1 & 1 & 1 & 0 \\
	1 & 0 & 1 & 1 & 0 & 0
	\end{array}
	\right)
	\]
	This matrix satisfies the rank-1 condition.
	There are three hyperplanes in $U_{\alpha}$ and in $V_{\beta}$. 
	These hyperplanes are identified with their normal vectors
	$(1\ 0)$, $(0\ 1)$, and $(1\ 1)$.     
	Then an edge between $\pi \in \varPi_{\alpha}$ and 
	$\sigma \in \varSigma_{\beta}$ is given if and only if $A_{\alpha \beta} = \pi^{\top} \sigma$, such as 
	$A_{13} = (0\ 1)^{\top} (1\ 1)$ and $A_{22} = (1\ 1)^\top (1\ 1)$.
	The graph $G$ is given in Figure~\ref{fig:G}.
		\begin{figure}[t]
			\begin{center}
				\includegraphics[scale=0.7]{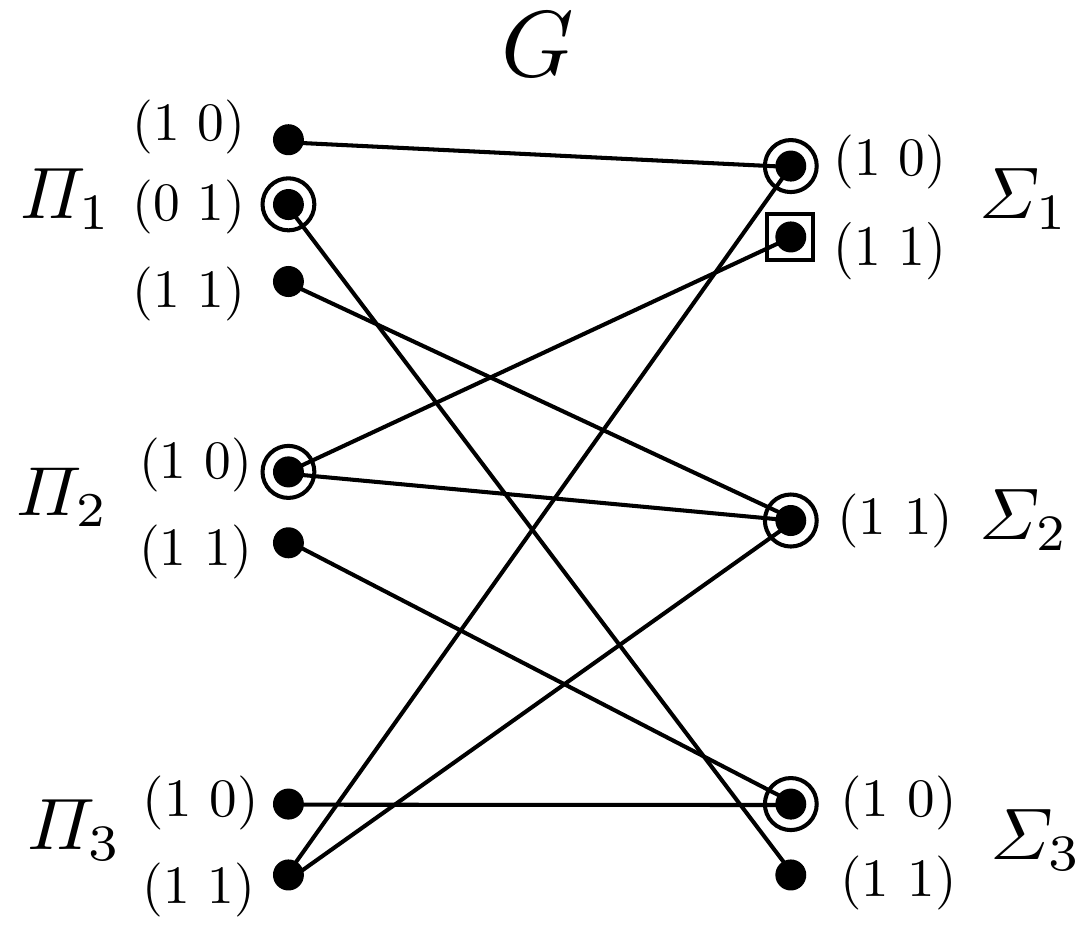}
				\caption{Bipartite graph $G$}
				\label{fig:G}
			\end{center}
		\end{figure}\noindent
	Now matroid ${\bf M}(\varPi_{\alpha})$ (resp. ${\bf M}(\varSigma_{\beta})$) is 
	the matroid of linear dependence of normal vectors
	(of hyperplanes) in $\varPi_{\alpha}$ (resp. $\varSigma_{\beta}$).
	Consider circled vertices in this figure, 
	which forms a cover $(H,K)$. 
	The corresponding subspace $(X(H),Y(K))$ is given by
	\begin{eqnarray*}
	X(H) &= &{\bf F} (1\ 0)^{\top} \oplus {\bf F} (0\ 1)^{\top} \oplus {\bf F}^2, \\
	Y(K) & = & {\bf F} (0\ 1)^{\top} \oplus {\bf F} (1\ 1)^{\top} \oplus {\bf F} (0\ 1)^{\top},
	\end{eqnarray*}
	where its dimension (obviously) matches 
	the matroid quantity $6 + 6 - \rho^+(H) - \rho^-(K) = 7$.
	One can verify that $(X(H), Y(K))$ is indeed stable.
	For example, 
	\[
	A_{11}(X(H)_1,Y(K)_1) = {\bf F} 
		\left(
		\begin{array}{cc}
		1  & 0  
		\end{array}
		\right)
	 \left(
	\begin{array}{cc}
	1 & 0 \\
	0 & 0
	\end{array}
	\right) 
	\left(
	\begin{array}{c}
	0  \\
	1  
	\end{array}
	\right)  = \{ 0\}.
	\]
	On the other hand, if $(0\ 1) \in \varSigma_{1} \cap K$ is replaced 
	by squared vertex $(1\ 1)$, then $(H, K)$ is not a cover, and $(X(H),Y(K))$ 
	is not stable:
	 \[
	 A_{11}(X(H)_1,Y(K)_1) = {\bf F} 
	 		\left(
	 		\begin{array}{cc}
	 		1  & 0  
	 		\end{array}
	 		\right)
	  \left(
	 \begin{array}{cc}
	 1 & 0 \\
	 0 & 0
	 \end{array}
	 \right) 
	 \left(
	 \begin{array}{c}
	 1  \\
	 1  
	 \end{array}
	 \right)  = {\bf F}.
	 \] 
\end{Ex}

The rest of this subsection is devoted to proving this theorem.
Let $A = (A_{\alpha \beta})$ be a partitioned matrix satisfying the rank-1 condition.
The following property of a rank-$1$ matrix is essential in our reduction.
\begin{Lem}\label{lem:rank1}
	For subspaces $X_{\alpha} \subseteq U_{\alpha}$ and $Y_{\beta} 
	\subseteq V_{\beta}$, 
	if $A_{\alpha \beta}(X_{\alpha}, Y_{\beta}) = \{0\}$, then $X_{\alpha} \subseteq \ker A_{\alpha \beta}^{\top}$ or $Y_{\beta} \subseteq \ker A_{\alpha \beta}$.
\end{Lem}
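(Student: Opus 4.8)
The plan is to exploit the rank-1 factorization of $A_{\alpha\beta}$ together with the fact that a field has no zero divisors. First I would dispose of the degenerate case $A_{\alpha\beta}=O$: here $\ker A_{\alpha\beta}^\top = U_\alpha$ and $\ker A_{\alpha\beta}=V_\beta$, so both inclusions hold trivially and there is nothing to prove. Hence I may assume $\rank A_{\alpha\beta}=1$.

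In that case I would write $A_{\alpha\beta}=p\,q^\top$ for some nonzero vectors $p\in{\bf F}^{n_\alpha}$ and $q\in{\bf F}^{m_\beta}$. Then for $x\in U_\alpha$ and $y\in V_\beta$ the bilinear form evaluates as $A_{\alpha\beta}(x,y)=x^\top p\,q^\top y=(p^\top x)(q^\top y)$, a product of two scalars. Consequently $\ker A_{\alpha\beta}^\top=\{x\mid p^\top x=0\}$ is exactly the hyperplane of $U_\alpha$ normal to $p$ (using $q\neq 0$), and $\ker A_{\alpha\beta}=\{y\mid q^\top y=0\}$ is the hyperplane of $V_\beta$ normal to $q$ (using $p\neq 0$). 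This is the point at which the rank-1 hypothesis enters: it forces each kernel to be a single hyperplane determined by one factor.

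I would then argue by contradiction. Suppose neither $X_\alpha\subseteq\ker A_{\alpha\beta}^\top$ nor $Y_\beta\subseteq\ker A_{\alpha\beta}$ holds. Choose $x_0\in X_\alpha$ with $p^\top x_0\neq 0$ and $y_0\in Y_\beta$ with $q^\top y_0\neq 0$. Since ${\bf F}$ is a field, the product $(p^\top x_0)(q^\top y_0)$ of two nonzero scalars is itself nonzero, so $A_{\alpha\beta}(x_0,y_0)\neq 0$ with $x_0\in X_\alpha$ and $y_0\in Y_\beta$. This contradicts the hypothesis $A_{\alpha\beta}(X_\alpha,Y_\beta)=\{0\}$, and the dichotomy follows.

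The statement is elementary, and I do not anticipate a genuine obstacle; the only point requiring care is the bookkeeping of transposes, so that $\ker A_{\alpha\beta}^\top$ (a subspace of $U_\alpha$) is matched with the left factor $p$ and $\ker A_{\alpha\beta}$ (a subspace of $V_\beta$) with the right factor $q$. The essential content is simply that in a field a product of nonzero elements is nonzero; this is precisely what makes the \emph{or} in the conclusion work, and it is where the rank-1 assumption is indispensable (for $\rank A_{\alpha\beta}\geq 2$ one could have $A_{\alpha\beta}(X_\alpha,Y_\beta)=\{0\}$ without either factor-type inclusion).
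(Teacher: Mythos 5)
Your proof is correct. It takes a mildly different route from the paper's: rather than invoking the outer-product factorization $A_{\alpha\beta}=p\,q^{\top}$, the paper argues directly from the rank. Assuming neither inclusion holds, it picks $u\in X_{\alpha}$ and $v'\in V_{\beta}\setminus Y_{\beta}$ with $A_{\alpha\beta}(u,v')\neq 0$, and $v\in Y_{\beta}$ and $u'\in U_{\alpha}\setminus X_{\alpha}$ with $A_{\alpha\beta}(u',v)\neq 0$, notes $A_{\alpha\beta}(u,v)=0$ by stability, and observes that the $2\times 2$ matrix of these values has determinant $A_{\alpha\beta}(u,v')\,A_{\alpha\beta}(u',v)\neq 0$, forcing $\rank A_{\alpha\beta}\geq 2$ --- a contradiction. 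Your version makes the mechanism more transparent: the dichotomy in the conclusion comes from the absence of zero divisors in ${\bf F}$ applied to the scalar product $(p^{\top}x)(q^{\top}y)$, and the explicit identification of $\ker A_{\alpha\beta}^{\top}$ and $\ker A_{\alpha\beta}$ with the hyperplanes normal to $p$ and $q$ meshes naturally with the rest of Section~4, where each rank-one block is written as $c\,\pi^{\top}\sigma$ anyway. The paper's determinant argument buys a small amount of robustness: it never writes down a factorization and exhibits a nonzero $2\times 2$ minor directly, which is the form of the statement one wants to contrast with the higher-rank counterexamples mentioned later. Both arguments are complete; your handling of the degenerate case $A_{\alpha\beta}=O$ and of the transpose bookkeeping is exactly right.
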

\begin{proof}
	Suppose to the contrary that $A_{\alpha \beta}(X_{\alpha}, Y_{\beta}) = \{0\}$,  $X_{\alpha} \not \subseteq \ker A_{\alpha \beta}^{\top}$, and $Y_{\beta} \not \subseteq \ker A_{\alpha \beta}$.
	   Then $0 \neq X_{\alpha} \neq U_{\alpha}$, $0 \neq Y_{\beta} \neq V_{\beta}$, 
		$A_{\alpha \beta}(X_{\alpha},V_{\beta}) \neq \{0\}$, and $A_{\alpha \beta}(U_{\alpha},Y_{\beta}) \neq \{0\}$. 
		There are nonzero vectors 
		$u \in X_{\alpha}, u' \in U_{\alpha} \setminus X_{\alpha}$, $v \in Y_{\beta}, v' \in V_{\beta} \setminus Y_{\beta}$
		such that $A_{\alpha \beta}(u,v') \neq 0$, $A_{\alpha \beta}(v,u') \neq 0$, and $A_{\alpha \beta}(u,v) = 0$.
		Thus
		\[
		\det \left(
		\begin{array}{ccc}
		A_{\alpha \beta}(u,v') & A_{\alpha \beta}(u',v') \\
		A_{\alpha \beta}(u,v) & A_{\alpha \beta}(u',v)
		\end{array}
		\right) = 
		\det \left(
		\begin{array}{ccc}
		A_{\alpha \beta}(u,v') & A_{\alpha \beta}(u',v') \\
		0 & A_{\alpha \beta}(u',v)
		\end{array}
		\right)
		\neq 0.
		\]
		This means that a $2 \times 2$ submatrix of a matrix obtained from $A_{\alpha \beta}$ by change of basis 
		has nonzero determinant, and hence $\rank A_{\alpha \beta} \geq 2$. 
		This contradicts $\rank A_{\alpha \beta} \leq 1$.
\end{proof}
For a subspace $(X,Y)$ in ${\cal L} \times {\cal M}$, 
let $\varPi_{\alpha,X}$ denote the set of hyperplanes  in $\varPi_\alpha$ containing $X_\alpha$, and
let $\varSigma_{\beta,Y}$ denote the set of  
hyperplanes in $\varSigma_{\beta}$ containing $Y_\beta$.
Let $\varPi_X$ be the union of $\varPi_{\alpha,X}$ over $\alpha$, 
and let $\varSigma_Y$ be the union of $\varPi_{\beta,Y}$ over $\beta$. 
\begin{Lem}\label{lem:cover}
	\begin{itemize}
\item[{\rm (1)}] If $(H,K) \in \varPi \times \varSigma$ is a cover, then $(X(H), Y(K))$ is a stable subspace.
\item[{\rm (2)}] If $(X,Y)$ is a stable subspace, 
	then $(\varPi_X, \varSigma_Y)$ is a cover.
	\item[{\rm (3)}] If $(X,Y)$ is a maximum stable subspace, 
	then
	$(X,Y) = (X({\varPi_X}), Y({\varSigma_Y}))$.
	\end{itemize}
\end{Lem}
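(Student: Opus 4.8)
The plan is to prove the three parts of Lemma~\ref{lem:cover} in order, relying on Lemma~\ref{lem:rank1} as the workhorse for the connection between stability and the hyperplanes $\ker A_{\alpha\beta}$, $\ker A_{\alpha\beta}^\top$, and on the characterization of stability in terms of the blockwise condition $A_{\alpha\beta}(X_\alpha,Y_\beta)=\{0\}$.

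For part (1), I would start from a cover $(H,K)$, fix an arbitrary pair $(\alpha,\beta)$ with $\rank A_{\alpha\beta}=1$, and show $A_{\alpha\beta}(X(H)_\alpha, Y(K)_\beta)=\{0\}$. The key observation is that this block produces exactly one edge in $G$, namely $(\pi,\sigma)$ with $\pi=\ker A_{\alpha\beta}^\top$ and $\sigma=\ker A_{\alpha\beta}$. Since $(H,K)$ is a cover, this edge meets $H\cup K$, so either $\pi\in H\cap\varPi_\alpha$ or $\sigma\in K\cap\varSigma_\beta$. In the first case $X(H)_\alpha\subseteq\pi=\ker A_{\alpha\beta}^\top$, and in the second $Y(K)_\beta\subseteq\sigma=\ker A_{\alpha\beta}$; either way $A_{\alpha\beta}(X(H)_\alpha,Y(K)_\beta)=\{0\}$. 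Ranging over all $(\alpha,\beta)$ gives stability.

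For part (2), I would take a stable subspace $(X,Y)$ and verify that every edge of $G$ meets $\varPi_X\cup\varSigma_Y$. Again an edge corresponds to a block $(\alpha,\beta)$ with $\pi=\ker A_{\alpha\beta}^\top$, $\sigma=\ker A_{\alpha\beta}$. Stability gives $A_{\alpha\beta}(X_\alpha,Y_\beta)=\{0\}$, so Lemma~\ref{lem:rank1} yields $X_\alpha\subseteq\ker A_{\alpha\beta}^\top=\pi$ or $Y_\beta\subseteq\ker A_{\alpha\beta}=\sigma$, i.e.\ $\pi\in\varPi_{\alpha,X}\subseteq\varPi_X$ or $\sigma\in\varSigma_{\beta,Y}\subseteq\varSigma_Y$. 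Thus the edge is covered, and $(\varPi_X,\varSigma_Y)$ is a cover. This is the step where Lemma~\ref{lem:rank1} is indispensable, since it is precisely the rank-1 property that converts a joint vanishing condition into a disjunction about individual factors.

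For part (3), I would argue by a maximality/sandwiching comparison. By definition $X_\alpha\subseteq\pi$ for every $\pi\in\varPi_{\alpha,X}$, so $X_\alpha\subseteq X(\varPi_X)_\alpha$ and hence $X\subseteq X(\varPi_X)$; dually $Y\subseteq Y(\varSigma_Y)$. On the other hand, part (2) shows $(\varPi_X,\varSigma_Y)$ is a cover, so by part (1) the pair $(X(\varPi_X),Y(\varSigma_Y))$ is stable, and it contains $(X,Y)$ coordinatewise, giving $\dim X(\varPi_X)+\dim Y(\varSigma_Y)\geq\dim X+\dim Y$. Since $(X,Y)$ is \emph{maximum}, equality must hold, and combined with the containments $X\subseteq X(\varPi_X)$, $Y\subseteq Y(\varSigma_Y)$ this forces $X=X(\varPi_X)$ and $Y=Y(\varSigma_Y)$. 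The only subtlety here is handling the two inclusions going in \emph{opposite} directions under the partial order $\preceq$ on stable subspaces (recall $X$-components grow while $Y$-components shrink); to avoid confusion I would phrase the comparison purely in terms of the ordinary subspace inclusions $X\subseteq X(\varPi_X)$ and $Y\subseteq Y(\varSigma_Y)$ together with the dimension identity, rather than through $\preceq$. The main obstacle, such as it is, is just keeping the per-block bookkeeping clean; the mathematical content is entirely carried by Lemma~\ref{lem:rank1} and the cover/edge correspondence.
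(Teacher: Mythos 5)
Your proposal is correct and follows essentially the same route as the paper: part (2) via Lemma~\ref{lem:rank1}, part (3) by combining (1), (2), the inclusions $X\subseteq X(\varPi_X)$, $Y\subseteq Y(\varSigma_Y)$, and maximality. The only cosmetic difference is that you prove (1) directly (every edge covered implies every block vanishes) whereas the paper argues the contrapositive; the content is identical.
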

\begin{proof}
	(1).
	Suppose that $(X,Y) = (X(H),Y(K))$ is not stable.
	Then $A_{\alpha \beta}(X_\alpha,Y_\beta) \neq \{0\}$ for some $\alpha,\beta$.
	This implies that $X_\alpha \not \subseteq \ker A_{\alpha \beta}^{\top}$ and 
	$Y_\beta \not \subseteq \ker A_{\alpha \beta}$.
	Therefore $\varPi_X$ does not contain $\ker A_{\alpha \beta}^{\top}$ 
	and $\varSigma_Y$ does not contain $\ker A_{\alpha \beta}$.
	By $H \subseteq \varPi_X$ and $K \subseteq \varSigma_Y$, 
	the edge between $\ker A_{\alpha \beta}^{\top}$ and $\ker A_{\alpha \beta}$
	does not meet $H \cup K$. Namely $(H,K)$ is not a cover.
	
	(2). Suppose that $(X, Y)$ is stable.
	Consider arbitrary $\alpha,\beta$.
	Now $A_{\alpha \beta}(X_\alpha,Y_\beta) = \{0\}$.
	By Lemma~\ref{lem:rank1}, 
	it holds that $X_\alpha \subseteq \ker A_{\alpha \beta}^{\top}$ 
	or $Y_\beta \subseteq \ker A_{\alpha \beta}$.
	 This means that the endpoints of edge joining 
	 $\ker A_{\alpha \beta}^\top$ and $\ker A_{\alpha \beta}$
	 meet $\varPi_{\alpha,X} \cup \varSigma_{\beta,Y}$.

	(3). Suppose that $(X,Y)$ is a maximum stable subspace.
	By (1) and (2),  subspace $(X({\varPi_X}), Y({\varSigma_Y}))$ is also stable.
	By $X \subseteq X({\varPi_X})$, $Y \subseteq Y({\varSigma_Y})$, and the maximality,
	it must hold that $(X,Y) = (X({\varPi_X}), Y({\varSigma_Y}))$.
\end{proof}
Thus the problem (MSSP) under the rank-1 condition is equivalent to: 
\begin{description}
	\item[MC:] Find a cover $(H,K)$ 
	such that $\dim X(H) + \dim Y(K)$ is maximum.
\end{description}
%This may be viewed as an extension of the fact that
%a vertex set is stable if and only if its complement is vertex-cover.

%
Now $\dim X(H)$ is equal to $n$ minus the rank of $H$ in ${\bf M}(\varPi)$, and  $\dim Y(K)$ is equal to $m$ minus the rank of $K$ in
${\bf M}(\varSigma)$. Namely, (MC) is 
nothing but the minimum cover problem dual to the independent matching problem on $({\bf M}(\varPi), {\bf M}(\varSigma), G)$.
This proves Theorem~\ref{thm:min-max}.

\subsection{Algorithm}
Here we present an algorithm to compute the DM-decomposition $A_{\rm DM}$.
Let $M$ be a maximum independent matching, 
which is obtained by the algorithm in Section~\ref{subsec:matroid} 
with $V^+ = \varPi$, $V^- = \varSigma$, 
${\bf M}^+ = {\bf M}(\varPi)$, and ${\bf M}^-= {\bf M}(\varSigma)$.
From $\tilde G_{M}$, 
we are going to construct a compact representation of ${\cal S}_{\max}$.
Let $C_0$ be the set of vertices $v$ having a path from $S$ to $v$, 
and let $C_{\infty}$ the set of vertices $v$ having a path from $v$ to $T$.
Let $H_0, H_{\infty}, K_0, K_{\infty}$ be the subsets of vertices defined by
\begin{eqnarray*}
&& H_0 := C_{0} \cap \partial^+ M,\quad  K_0 := C_{0} \cap \partial^- M, \\
&& H_\infty := C_{\infty} \cap \partial^+ M,\quad K_\infty := C_{\infty} \cap \partial^- M.
\end{eqnarray*}
Let
$\tilde G_{M}'$ be the digraph obtained from $\tilde G_{M}$ by deleting $C_0$ and $C_{\infty}$.
Consider the strongly connected component decomposition of $\tilde G'_M$. Let $h$ be the number of components 
meeting $\partial^+ M \cup \partial^- M$. 
Consider a partition 
$\{H_0, H_1,H_2,\ldots,H_h,H_{\infty}\}$ of $\partial^+ M$
such that $\pi$ and $\pi'$ belong to $H_{k}$ $(1 \leq k \leq h)$ if and only 
if $\pi$ and $\pi'$ belong to the same component.
Accordingly, consider a partition 
$\{K_0, K_1,K_2,\ldots,K_h,K_{\infty}\}$ of $\partial^- M$
such that $H_k$ is matched to $K_k$ by $M$ for $k=1,2,\ldots,h$ 
(or $K_k$ belongs to the same component as $H_k$).
Define a partial order $\preceq$ on ${\cal P} := \{1,2,\ldots,h\}$ such that
$k \preceq l$ if and only if  there is a directed path in $\tilde G_M'$ from $H_l$ to $H_k$.
For an ideal $J \in {\cal J}({\cal P})$, define $H_J \subseteq \varPi$ and 
$K_J \subseteq \varSigma$ by
\[
H_J := \bigcup \{ H_{k} \, \mid \, k \in {\cal P} \cup \{\infty\} \setminus J\}, 
\quad
K_J := \bigcup \{ K_{k} \, \mid \, k \in J \cup \{0\}\}.
\]

\begin{Prop}
	${\cal J}({\cal P})$ is isomorphic to ${\cal S}_{\max}$, 
	where an isomorphism is given by
	\begin{equation}
	{\cal J}({\cal P}) \ni J \mapsto 
	(X(H_J), 
	Y(K_J)). 
	\end{equation}
In particular, 	${\cal S}_{\max}$ is isomorphic to a distributive sublattice of ${\cal L}$.
\end{Prop}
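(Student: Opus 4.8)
The plan is to establish a lattice isomorphism between $\mathcal{J}(\mathcal{P})$ and $\mathcal{S}_{\max}$ by showing that the map $J \mapsto (X(H_J), Y(K_J))$ is (i) well-defined, i.e.\ each $(X(H_J),Y(K_J))$ is a maximum stable subspace; (ii) order-preserving in both directions; and (iii) a bijection. Throughout I would lean on Theorem~\ref{thm:min-max}, which identifies maximum stable subspaces with minimum covers, and on Lemma~\ref{lem:indep}, which characterizes minimum covers via vertex sets $C$ with $S \subseteq C$, $T \cap C = \emptyset$, and no outgoing edge in $\tilde G_M$.

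\textbf{Well-definedness.} First I would verify that for each ideal $J$, the pair $(H_J, K_J)$ is a minimum cover. The natural candidate is to set $C_J := C_0 \cup \bigcup_{k \in J} (H_k \cup K_k)$ (the ``down-set'' of components with respect to the reachability order, together with $C_0$), and to check that $C_J$ satisfies the three conditions of Lemma~\ref{lem:indep}~(2). The condition $S \subseteq C_J$ and $T \cap C_J = \emptyset$ follow because $S \subseteq C_0$ and $T \subseteq C_\infty$ (maximality of $M$ via Lemma~\ref{lem:indep}~(1) guarantees $C_0 \cap C_\infty = \emptyset$, so $T$ avoids $C_0$ and the non-$\infty$ components). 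The crucial point is that \emph{no edge of $\tilde G_M$ leaves $C_J$}: inside $\tilde G_M'$ this is exactly the statement that $J$ is an ideal of $\mathcal{P}$ (an edge from a component not in $J$ to one in $J$ would force, by the definition of $\preceq$, membership in $J$), while edges touching $C_0$ or $C_\infty$ are handled by the fact that $C_0$ absorbs all its out-neighbors and $C_\infty$ receives but does not emit into the complement. Then Lemma~\ref{lem:indep}~(2) gives that $(H_J, K_J) = (\varPi \setminus C_J, \varSigma \cap C_J)$ is a minimum cover, and Theorem~\ref{thm:min-max}~(1) yields that $(X(H_J), Y(K_J))$ is a maximum stable subspace.

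\textbf{Order-preservation and bijectivity.} For monotonicity, I would observe that $J \subseteq J'$ forces $H_{J'} \subseteq H_J$ and $K_J \subseteq K_{J'}$, hence $X(H_J) \subseteq X(H_{J'})$ and $Y(K_{J'}) \subseteq Y(K_J)$; by the partial order on $\mathcal{S}_{\max}$ (namely $(X,Y) \preceq (X',Y')$ iff $X \subseteq X'$), this is precisely $(X(H_J),Y(K_J)) \preceq (X(H_{J'}),Y(K_{J'}))$. For the reverse direction and injectivity I would use that the components in $\tilde G_M'$ meeting $\partial^+ M \cup \partial^- M$ are genuinely distinct and that distinct ideals $J \neq J'$ differ on at least one component $H_k$ with $k \in \mathcal{P}$, which by Lemma~\ref{lem:indep}~(3) contributes nontrivially to the rank, so the corresponding subspaces differ in dimension along that block; this separates the images. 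Surjectivity follows by running the argument backwards: given any maximum stable subspace, Lemma~\ref{lem:cover}~(3) and Theorem~\ref{thm:min-max}~(1) write it as $(X(H),Y(K))$ for a minimum cover $(H,K)$, and every minimum cover arises from some valid $C$ as in Lemma~\ref{lem:indep}~(2); I would argue that any such $C$ restricts to a union of strongly connected components of $\tilde G_M'$ closed under reachability, i.e.\ to an ideal $J$, giving $C = C_J$.

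\textbf{The main obstacle.} The delicate step is the tight correspondence in the surjectivity argument: showing that \emph{every} vertex set $C$ admissible in Lemma~\ref{lem:indep}~(2) is forced to contain $C_0$, be disjoint from $C_\infty$, and be a \emph{union of entire strongly connected components} of $\tilde G_M'$ that is closed downward under $\preceq$. Closure under reachability is immediate from the ``no outgoing edge'' condition, but I must also rule out $C$ splitting a strongly connected component, which uses that within a component every vertex reaches every other, so the no-outgoing-edge property propagates membership across the whole component. Handling the boundary components $H_0,K_0,H_\infty,K_\infty$ correctly---ensuring the matched pairs $H_k, K_k$ stay together and that the matching edges $(\pi,\sigma) \in M$ (which create the two-way reachability collapsing $H_k$ and $K_k$ into one component) are accounted for---is where the bookkeeping is most error-prone, and is the part I would write out in full detail.
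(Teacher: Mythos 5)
Your overall strategy is the paper's: convert an ideal $J$ into a vertex set $C$ satisfying the conditions of Lemma~\ref{lem:indep}~(2), read off a minimum cover, and invoke Theorem~\ref{thm:min-max}; conversely, extract an ideal from the set $C$ attached to a minimum cover. However, the well-definedness step contains a genuine error. Your set $C_J = C_0 \cup \bigcup_{k\in J}(H_k\cup K_k)$ contains, outside $C_0$, only \emph{matched} vertices, so it is not closed under out-edges of $\tilde G_M$: a $G$-edge from some $\pi\in H_k$ with $k\in J$ to an unmatched $\sigma\in{\rm cl}^-(\partial^- M)\setminus\partial^- M$ with $\sigma\notin C_0$, or an exchange edge from $\pi$ to some $\pi''\in{\rm cl}^+(\partial^+ M)\setminus\partial^+ M$, leaves $C_J$. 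For the same reason $(H_J,K_J)$ is not equal to $(\varPi\setminus C_J,\varSigma\cap C_J)$ and is not even a cover in general: the edge $(\pi,\sigma)$ just described meets neither $H_J$ (since $k\in J$) nor $K_J$ (since $K_J\subseteq\partial^- M$). The paper instead takes $C$ to be $C_0$ together with \emph{all} vertices of $\tilde G_M$ reachable from $\bigcup_{k\in J}H_k$; this $C$ does satisfy Lemma~\ref{lem:indep}~(2), but then $\varPi\setminus C\supseteq H_J$ and $\varSigma\cap C\supseteq K_J$ are in general strict inclusions, and one needs an additional argument, absent from your write-up, that the associated \emph{subspaces} nevertheless agree: by Lemma~\ref{lem:indep}~(3) the rank of $\varPi\setminus C$ equals $|\partial^+ M\setminus C|=|H_J|$, so $\varPi\setminus C$ lies in the closure of the independent set $H_J$ and the extra hyperplanes do not shrink the intersection, giving $X(\varPi\setminus C)=X(H_J)$, and symmetrically $Y(\varSigma\cap C)=Y(K_J)$. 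This rank step is precisely what makes the map land on $(X(H_J),Y(K_J))$, and without it the isomorphism is not established.

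A secondary remark: in the converse direction you work harder than necessary. You need not show that every admissible $C$ equals some $C_J$ as a vertex set; the paper simply sets $J:=\{k\in{\cal P} : H_k\subseteq C\}$, notes that the no-outgoing-edge property makes $J$ an ideal, and identifies $(X,Y)$ with $(X(H_J),Y(K_J))$ again via the rank argument. Your concern about $C$ splitting a strongly connected component is legitimate but dissolves once you only record which blocks $H_k$ of $\partial^+ M$ lie inside $C$.
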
  
\begin{proof}
	Let $J \in {\cal J}({\cal P})$.
	Let $C'$ be the set of vertices in $\tilde G_M$ reachable 
	from a vertex in $\bigcup_{k\in J} H_{k}$.
	Let $C:= C' \cup C_{0}$.
	Then $S \subseteq C$, $T \cap C = \emptyset$, and
	no edge goes out from $C$.
	Hence $(\varPi \setminus C, \varSigma \cap C)$ is a minimum cover (Theorem~\ref{thm:min-max}~(2)).
	Thus $(X(\varPi \setminus C),Y(\varSigma \cap C))$ is a maximum stable subspace (Theorem~\ref{thm:min-max}~(2)).
	By definition of $C$ and $J \in {\cal J}({\cal P})$,
	it holds $\partial^+ M \setminus C = H_J$ and 
	$\partial^- M \cap C = K_J$.
	Also the rank of $\varPi \setminus C$ in ${\bf M}(\varPi)$ 
	is equal to $|\partial^+ M \setminus C|$, and  the rank of $\varSigma \cap C$ in ${\bf M}(\varSigma)$ 
	is equal to $|\partial^- M \cap C|$ (Lemma~\ref{lem:indep}~(3))
	Thus $X(H_J) = X(\varPi \setminus C)$, $Y(K_J) = Y(\varSigma \cap C)$, and $(X(H_J),Y(K_J)) \in {\cal S}_{\max}$.

	Conversely, let $(X,Y)$ be a maximum stable subspace. 
	Then $(\varPi_X, \varSigma_Y)$ is a minimum cover. 
	By Lemma~\ref{lem:indep}, 
	$(\varPi_X, \varSigma_Y) = (\varPi \setminus C, \varSigma \cap C)$ holds
	for some $C$ such that $S \subseteq C$, $T \cap C = \emptyset$, 
	and ($*$) no edge goes out from $C$.
	Let $J$ be the set of indices $k \in \{1,2,\ldots,h\}$ such that $H_k$ belongs to $C$.
	Then, by the property ($*$), $J$ is an ideal of ${\cal P}$.
\end{proof}

Thus, from the strongly connected component decomposition of $\tilde G'_{M}$, 
we obtain a poset ${\cal P}$ representing ${\cal S}_{\max}$ as ${\cal S}_{\max} \simeq {\cal J}({\cal P})$. 
Relabel ${\cal P} = \{1,2,\ldots,h\}$ so that $k \prec l$ implies $k < l$ for $k,l \in {\cal P}$.  
Then $\{1,2,\ldots,k\}$ is an ideal.
For $k=0,1,2\ldots,h$, 
define stable subspace $(X^{k},Y^k)$ by
\[
X^k := X(H_{k+1} \cup H_{k+2} \cup \cdots \cup H_{h} \cup H_{\infty}), \quad
Y^k := Y(K_0 \cup K_1 \cup K_2 \cup \cdots \cup K_k).	
\]
Then $(X^0,Y^0) \prec (X^1,Y^1) \prec \cdots \prec (X^h,Y^h)$ is a maximal chain in ${\cal S}_{\max}$.

Next we construct bases of $U$ and of $V$ 
to obtain $A_{\rm DM}$.
A hyperplane in $\varPi_{\alpha}$
is identified with its normal vector, 
which is an $n_{\alpha}$-dimensional row vector. 
Similarly a hyperplane in $\varSigma_{\beta}$
is identified with an $m_{\beta}$-dimensional row vector.
Each submatrix $A_{\alpha \beta}$ of rank $1$ is represented 
as $c \pi^{\top} \sigma$ for some
$\pi \in \varPi_{\alpha}$, $\sigma \in \varSigma_{\beta}$, 
and $c \in {\bf F} \setminus \{0\}$.
Now $\varPi_\alpha$ is a set of 
$n_\alpha$-dimensional row vectors,
and $\varSigma_\beta$ is a set of $m_\beta$-dimensional 
row vectors.
${\bf M}(\varPi_\alpha)$ and ${\bf M}(\varSigma_\beta)$ are 
matroids of linear independence of these vectors.
For each $\alpha$, choose any set $\tilde \varPi_{\alpha}$ of vectors such that 
$(\partial^+ M \cap \varPi_{\alpha}) \cup \tilde \varPi_{\alpha}$ 
is a basis of (the dual space of) ${\bf F}^{n_{\alpha}}$, and
add  $\tilde \varPi_{\alpha}$ to $H_0$.
Let $H := H_{0} \cup H_1 \cup \cdots \cup H_h \cup H_{\infty}$.
Similarly, for each $\beta$, 
choose any set $\tilde \varSigma_{\beta}$ of vectors such that
$(\partial^- M \cap \varSigma_{\beta}) \cup \tilde \varSigma_{\beta}$ 
is a basis of (the dual space of) ${\bf F}^{m_{\beta}}$, and $\tilde \varSigma_{\beta}$ to $K_{\infty}$.
Let $K := K_{0} \cup K_1 \cup \cdots \cup K_h \cup K_{\infty}$.
Suppose that $H = \{\pi_{1}, \pi_2,\ldots,\pi_n\}$, 
where indices are ordered as: 
if $\pi_i \in H_{k}$, $\pi_{j} \in H_{l}$, and $k < l$, 
then $i < j$.
Suppose that $K = \{\sigma_1,\sigma_2,\ldots,\sigma_m\}$, where
indices are ordered as: 
if $\sigma_i \in K_{k}$, $\sigma_{j} \in K_{l}$, and $k < l$, 
then $i < j$.

Then 
$\varPi_\alpha \cap H  = \{\pi_{\alpha_1},\pi_{\alpha_2}, \ldots,  \pi_{\alpha_{n_\alpha}}\}$ for $\alpha_{1} < \alpha_{2} < \cdots < \alpha_{n_\alpha}$ and
$\varSigma_\beta \cap K = \{\pi_{\beta_1},\pi_{\beta_2}, \ldots,  \pi_{\beta_{m_\beta}}  \}$ for $\beta_1 < \beta_2 < \cdots < \beta_{m_\beta}$.
Let $R_\alpha$ be the nonsingular $n_\alpha \times n_\alpha$ matrix 
whose $\lambda$th row vector is $\pi_{\alpha_\lambda}$, 
and let $S_\beta$ be the nonsingular $m_\beta \times m_\beta$ matrix whose $\lambda$th row vector is $\sigma_{\beta_\lambda}$: 
\begin{equation}
R_\alpha := \left(
\begin{array}{ccccc}
  &  & \pi_{\alpha_{1}} & & \\
  &  & \pi_{\alpha_{2}} &  & \\
  &  &     \vdots     &  & \\
   &  &  \pi_{\alpha_{n_\alpha}}   &   & 
\end{array} \right) \quad 
S_\beta := \left(
\begin{array}{ccccc}
&  & \sigma_{\beta_{1}} & & \\
&  & \sigma_{\beta_{2}} &  & \\
&  &     \vdots     &  & \\
&  &  \sigma_{\beta_{m_\beta}}   &   & 
\end{array} \right).
\end{equation}
Let $E_\alpha$ be a nonsingular $n_\alpha \times n_\alpha$ matrix
such that $R_\alpha E_\alpha$ is upper-triangular.
Let $e_{\alpha,\lambda}$ denote the $\lambda$th column vector of $E_{\alpha}$.
Let $F_\beta$ be a nonsingular $m_\beta \times m_\beta$ matrix
such that $S_\beta F_\beta$ is lower-triangular.
Let $f_{\beta,\lambda}$ denote the $\lambda$th column vector of $F_{\beta}$.
For $i=1,2,\ldots,n$, define an $n$-dimensional vector $e_i$ as follows.
Suppose that $\alpha_\lambda = i$.
For $j$ with $\sum_{k=0}^{\alpha-1} |H_k| < j \leq \sum_{k=0}^{\alpha} |H_k|$, 
the $j$th component of $e_i$ is equal to 
the $(j - \sum_{k=0}^{\alpha-1} |H_k|)$th component of 
$e_{\alpha,\lambda}$. 
All  other components of $e_i$ are defined to be zero. 
For $j=1,2,\ldots,m$, define $m$-dimensional vector $f_j$ as follows.
Suppose that $\beta_\lambda = j$.
For $i$ with $\sum_{k=0}^{\beta-1} |K_k| < i \leq \sum_{k=0}^{\beta} |K_k|$, 
the $i$th component of $f_j$ is equal to 
the $(i - \sum_{k=0}^{\beta-1} |K_k|)$th component of 
$f_{\beta,\lambda}$. 
All other components of $f_j$ are defined to be zero. 

Then the DM-decomposition $A_{\rm DM}$ of $A$ is given by
\[
A_{\rm DM} = (e_{n}\ e_{n-1}\ \cdots \ e_1)^{\top} A (f_{m}\ f_{m-1}\ \cdots \ f_1),
\]
which is verified by the next lemma.
\begin{Lem}
	For $k=0,1,2,\ldots,h$, the following hold:
	\begin{itemize}
		\item[{\rm (1)}] $\{e_1,e_2,\ldots,e_{i_k}\}$ is a basis of $X^k$ with $i_k = \sum_{l=0}^k |H_l|$. 
		\item[{\rm (2)}] $\{f_{j_{k}+1},f_{j_{k}+2},\ldots, f_{m}\}$ is a basis of $Y^k$ with $j_k = \sum_{l=0}^k |K_l|$.
	\end{itemize}
\end{Lem}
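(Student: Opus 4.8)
The plan is to prove (1) in detail; statement (2) is entirely symmetric, obtained by exchanging the roles of rows and columns, of $E_\alpha$ and $F_\beta$, and of upper- and lower-triangularity. I identify each hyperplane in $\varPi_\alpha$ with its normal row vector, so that $\pi e = 0$ expresses that $e$ lies on the hyperplane $\pi$. The strategy is to reduce the claim to a dimension count together with an inclusion: since every $E_\alpha$ is nonsingular and $e_i \in U_\alpha$ whenever $i = \alpha_\lambda$, the family $\{e_1, \dots, e_n\}$ is a basis of $U = U_1 \oplus \cdots \oplus U_\mu$, so any initial segment $\{e_1, \dots, e_{i_k}\}$ is automatically linearly independent. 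It therefore suffices to show that $\dim X^k = i_k$ and that each $e_i$ with $i \le i_k$ lies in $X^k$.

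For the dimension I would use that the completion step with $\tilde \varPi_{\alpha}$ makes $\varPi_\alpha \cap H = \{\pi_{\alpha_1}, \dots, \pi_{\alpha_{n_\alpha}}\}$ a basis of the dual space of ${\bf F}^{n_\alpha}$; thus its normals are linearly independent and the intersection of any subfamily has the expected codimension. Writing $c_k^\alpha := |(H_0 \cup \cdots \cup H_k) \cap \varPi_\alpha|$, the ordering convention on $H$ (which lists $H_0$ first, then $H_1$, and so on) guarantees that the local positions $1, \dots, c_k^\alpha$ within block $\alpha$ are exactly the $\pi_{\alpha_\lambda}$ lying in $H_0 \cup \cdots \cup H_k$. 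Since $X^k = X(H_{k+1} \cup \cdots \cup H_\infty)$, its block component $X^k_\alpha$ is the intersection of the independent hyperplanes $\pi_{\alpha_\mu}$ with $\mu > c_k^\alpha$, so $\dim X^k_\alpha = c_k^\alpha$. Summing over $\alpha$ gives $\dim X^k = \sum_\alpha c_k^\alpha = |H_0 \cup \cdots \cup H_k| = i_k$, which also establishes the asserted value of $i_k$.

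For the inclusion, fix $i \le i_k$ and write $i = \alpha_\lambda$; then $\pi_i \in H_0 \cup \cdots \cup H_k$ forces $\lambda \le c_k^\alpha$. Upper-triangularity of $R_\alpha E_\alpha$ says that its $(\mu,\lambda)$ entry $\pi_{\alpha_\mu} e_{\alpha,\lambda}$ vanishes for all $\mu > \lambda$, hence in particular for all $\mu > c_k^\alpha \ge \lambda$; this is precisely $e_{\alpha,\lambda} \in X^k_\alpha$, and since $e_i$ is the embedding of $e_{\alpha,\lambda}$ into $U_\alpha$ we conclude $e_i \in X^k$. Thus $\{e_1, \dots, e_{i_k}\}$ is a linearly independent subset of $X^k$ of cardinality $\dim X^k$, hence a basis, which proves (1).

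For (2) the same two-step argument applies: with $d_k^\beta := |(K_0 \cup \cdots \cup K_k) \cap \varSigma_\beta|$ one finds $\dim Y^k = m - j_k$, and lower-triangularity of $S_\beta F_\beta$ gives $\sigma_{\beta_\mu} f_{\beta,\lambda} = 0$ for $\mu < \lambda$, so that $f_{\beta,\lambda} \in Y^k_\beta$ whenever $\lambda > d_k^\beta$, i.e.\ whenever the corresponding global index exceeds $j_k$. I expect the only delicate point to be the bookkeeping that matches the triangular-elimination positions to the chain-block ordering; once that ordering is unwound as above, both the dimension count and the inclusion fall out immediately.
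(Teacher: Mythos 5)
Your proof is correct and follows essentially the same route as the paper's: both arguments rest on the ordering convention identifying $\varPi_\alpha\cap(H_0\cup\cdots\cup H_k)$ with an initial segment of the local indices, the upper-/lower-triangularity of $R_\alpha E_\alpha$ and $S_\beta F_\beta$ to place the relevant column vectors inside $(X^k)_\alpha$ and $(Y^k)_\beta$, and the independence of the completed hyperplane normals for the dimension count. You merely make the final ``and span'' step of the paper explicit by counting dimensions, which is a presentational rather than a substantive difference.
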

\begin{proof}
	(1) Suppose that $\varPi_\alpha \cap H = \{\pi_{\alpha_1},\pi_{\alpha_2}, \ldots,  \pi_{\alpha_{n_\alpha}}  \}$ for $\alpha_{1} < \alpha_{2} < \cdots < \alpha_{n_\alpha}$. 
	Then $\varPi_\alpha \cap (H_{k+1} \cup H_{k+2} \cup \cdots \cup H_{\infty}) = 
	\{\pi_{\alpha_{\lambda}},\pi_{\alpha_{\lambda+1}}, \ldots,  \pi_{\alpha_{n_\alpha}} \}$ for the minimum $\lambda$ with $\sum_{l=0}^{k}|H_l| <  \alpha_\lambda$.
	Then $(X^k)_{\alpha} := X(H_{k+1} \cup H_{k+2} \cup \cdots \cup H_{\infty} 
	)_{\alpha}$ is the intersection of hyperplanes $\pi_{\alpha_\lambda},\pi_{\alpha_{\lambda+1}}, \ldots,  \pi_{\alpha_{n_\alpha}}$.
    Since $R_\alpha E_\alpha$ is upper-triangular,  
    all $e_{\alpha_1},e_{\alpha_2},\ldots,e_{\alpha_{\lambda-1}}$ belong to $(X^k)_{\alpha}$, and span $(X^k)_{\alpha}$.
    Consequently, the statement holds.
    
    (2) Suppose that $\varSigma_\beta \cap K = \{\sigma_{\beta_1},\sigma_{\beta_2}, \ldots,  \sigma_{\beta_{m_\beta}}  \}$ for $\beta_1 < \beta_2 < \cdots < \beta_{m_\beta}$.
    Then $\varSigma_\beta \cap (K_{0} \cup K_{1} \cup \cdots \cup K_{k}) = 
    \{\sigma_{\beta_{1}},\sigma_{\beta_{2}}, \ldots,  \sigma_{\beta_\lambda}\}$ for the maximum $\lambda$ with  $\beta_{\lambda} \leq \sum_{l=0}^{k}|K_l|$.
	Then $(Y^k)_\beta := Y(K_{0} \cup K_{1} \cup \cdots \cup K_{k})_{\beta}$ is the intersection of hyperplanes $\sigma_{\beta_{1}},\sigma_{\beta_{2}}, \ldots,  \sigma_{\beta_\lambda}$.
	Since $S_\beta F_\beta$ is lower-triangular, all $f_{\beta_{\lambda+1}},f_{\beta_{\lambda+2}},\ldots,f_{\beta_{m_\beta}}$ belong to $(Y^k)_{\beta}$, and span $(Y^k)_{\beta}$.
\end{proof}

Finally we give a rough estimate of the time complexity 
to compute $A_{\rm DM}$.
Suppose that each submatrix $A_{\alpha \beta}$ of rank $1$ is 
given as an expression $\pi^{\top} \sigma$ for row vectors $\pi, \sigma$.
The bipartite graph $G = (\varPi,\varSigma, E)$ has $O(\mu \nu)$ vertices and $O(\mu \nu)$ edges.
Therefore the number of iterations of the independent matching algorithm 
is bounded by $\mu \nu$.
In the construction of $\tilde{G}_{M}$,
the edges added to $\varPi_{\alpha}$ are identified by
Gaussian elimination in $O(n_{\alpha}^2 |\varPi_{\alpha}|)$ time.
Consequently we can construct $\tilde{G}_{M}$ 
in $O(n^2 \nu + m^2 \mu)$ time.
An augmenting path is found in $O(\mu \nu)$ time.
Thus a maximum independent matching $M$ is 
obtained in $O(\mu^2 \nu^3 n^2  +\mu^3 \nu^2 m^2) = O(n^4 m^3+ n^3m^4)$ time.
The poset ${\cal P}$ is naturally obtained 
from the final $\tilde{G}'_M$ by the strongly connected component decomposition, and 
a maximal chain is also naturally obtained.
Matrices $E_{\alpha}, F_{\beta}$ 
are obtained in $O(n^3 + m^3)$ time.
The matrix
$(e_{n}\ e_{n-1}\ \cdots \ e_1)^{\top} A (f_{m}\ f_{m-1}\ \cdots \ f_1)$ is calculated in $O(nm^2 + n^2m)$ time.
The total time is $O(n^4 m^3 + n^3 m^4)$. This proves Theorem~\ref{thm:main}.
\begin{Ex}
Consider the partitioned matrix $A$ in Example~\ref{ex:1}.
According to the above algorithm, 
the DM-decomposition $A_{\rm DM}$ is computed as follows.
%
%First we identify hyperplane sets $\varPi_{\alpha}$ and $\varSigma_{\beta}$. 
Three hyperplanes (of normal vectors) $(1\ 0)$, $(0\ 1)$, 
and $(1\ 1)$ in
$U_{\alpha}$ are denoted by $\alpha a$, $\alpha b$, and $\alpha c$, respectively.
Similarly, 
three hyperplanes $(1\ 0)$, $(0\ 1)$, 
and $(1\ 1)$ in
$V_{\beta}$ are denoted by $\beta' a$, $\beta' b$, and $\beta'c$,
respectively.
Then $\varPi_{1} = \{1a, 1b,1c\}$, $\varPi_{2} = \{2a,2c\}$, 
$\varPi_{3} = \{3a, 3c\}$, $\varSigma_1 = \{1'a, 1'c\}$, $\varSigma_2 =\{2'c\}$, and $\varSigma_3 = \{3'a,3'c\}$. 
Consider the independent matching problem on 
$({\bf M}(\varPi), {\bf M}(\varSigma), G)$.
	\begin{figure}[t]
		\begin{center}
			\includegraphics[scale=0.7]{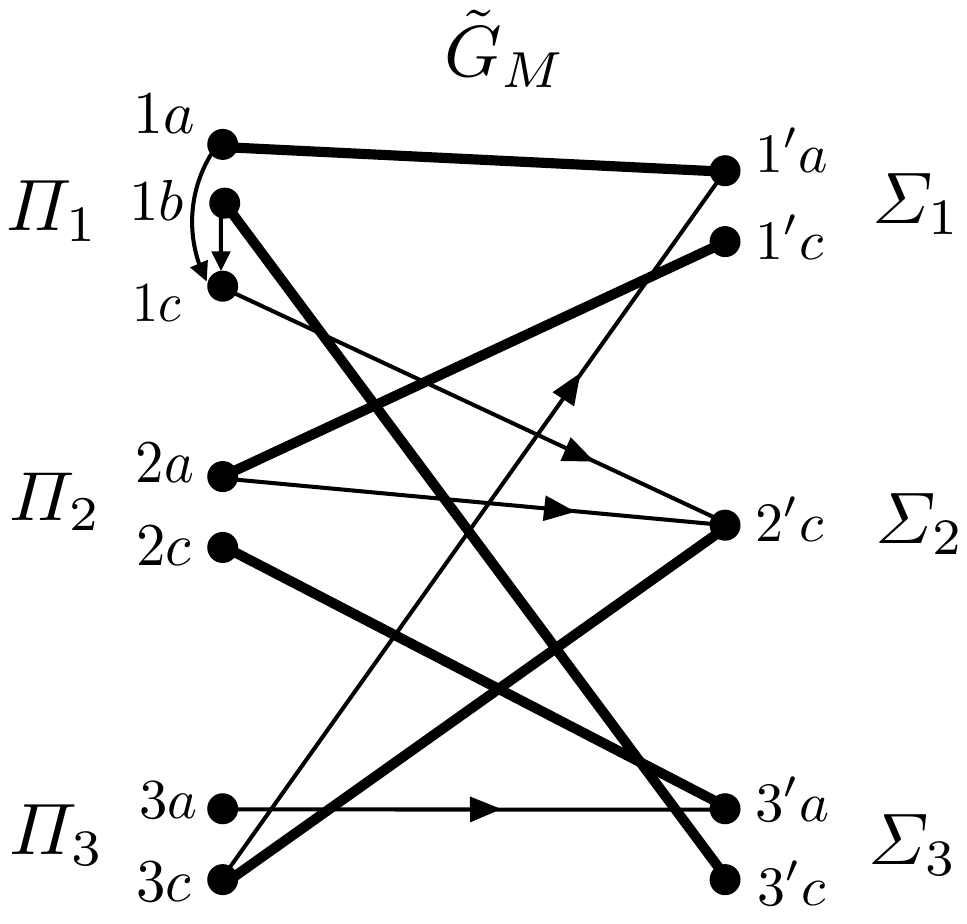}
			\caption{Auxiliary digraph $\tilde G_M$ for a maximum independent matching $M$}
			\label{fig:GM}
		\end{center}
	\end{figure}\noindent
Figure~\ref{fig:GM} 
depicts the auxiliary digraph $\tilde G_{M}$ 
for a maximum independent matching 
$M =\{(1a,1'a), (1b,3'c), (2a,1'c), (2c,3'a), (3c,2'c)\}$, where two directed edges corresponding to an edge in $M$ 
is drawn by a single thick undirected edge.
Now $S = \{3a\}$, $T = \emptyset$,  $C_0 = \{3a,3'a, 2c\}$, and 
$C_{\infty} = \emptyset$.
There are three strongly connected components 
in $\tilde G_M'$ meeting $M$.
Then
$H_k$ and $K_k$ $(k=0,1,2,3,\infty)$ are given as
\[
\begin{array}{ll}
H_{\infty} = \emptyset,  & K_{\infty} = \emptyset, \\
H_3 = \{1b \},   & K_{3} = \{3'c\}, \\
H_2 = \{2a \},   & K_{2} = \{1'c\}, \\
H_1 = \{1a,3c \},   & K_{1} = \{1'a, 2'c\}, \\
H_0 = \{2c\},   & K_{0} = \{3'a\},
\end{array}
\]
where the partial order $\preceq$ on ${\cal P} = \{1,2,3\}$ given by
\[
2 \succ 1 \prec 3.
\]
Add $3a$ to $H_0$.
The elements in $H = \bigcup_{k} H_{k}$
are ordered
as $2c, 3a, 1a,3c, 2a,1b$.
Add $2'a$ to $K_{\infty}$.  
The elements in 
$K = \bigcup_{k} K_{k}$ are ordered
as $3'a,1'a,2'c,1'c,3'c,2'a$.
Matrices $R_1,R_2, R_3, S_1, S_2, S_3$ are given by
\begin{eqnarray*}
R_1 = \left( \begin{array}{c} 1a \\ 1b  \end{array}\right) 
%\bordermatrix{ & \cr 3 & 1a \cr 6 & 2b}
= \left( \begin{array}{cc} 1 & 0\\ 0 & 1  \end{array}\right), && 
S_1 =  \left( \begin{array}{c} 1'a \\ 1'c  \end{array}\right) 
%\bordermatrix{ & \cr 2 & 1'a \cr 4 & 1'c}
= \left( \begin{array}{cc} 1 & 0\\ 1 & 1 \end{array} \right),\\
R_2 = \left( \begin{array}{c} 2c \\ 2a  \end{array}\right) 
%\bordermatrix{ & \cr 2 & 2c \cr 5 & 2a} =
= \left( \begin{array}{cc} 1 & 1\\ 1 & 0  \end{array}\right), && 
S_2 = \left( \begin{array}{c} 2'c \\ 2'a  \end{array}\right) 
%\bordermatrix{ & \cr 3 & 2'c \cr 6 & 2'a}
= \left( \begin{array}{cc} 1 & 1\\ 0 & 1  \end{array}\right),\\
R_3 = \left( \begin{array}{c} 3a \\ 3c  \end{array}\right) 
%\bordermatrix{ & \cr 1 & 3a \cr 4 & 3c} 
= \left( \begin{array}{cc} 1 & 0\\ 1 & 1  \end{array}\right), && 
S_3 = \left( \begin{array}{c} 3'a \\ 3'c  \end{array}\right) =
\left( \begin{array}{cc} 1 & 0\\ 1 & 1  \end{array}\right).		
\end{eqnarray*}
Matrices $E_1,E_2,E_3,F_1,F_2,F_3$ 
(for which $R_{\alpha} E_\alpha$ is upper-triangular and $S_{\beta} F_{\beta}$ is lower-triangular) are given by
\begin{eqnarray*}
	E_1 = \left( \begin{array}{cc} 1 & 0\\ 0 & 1  \end{array}\right), && F_1 = \left( \begin{array}{cc} 1 & 0\\ 0 & 1 \end{array} \right),\\
	E_2 = \left( \begin{array}{cc} 0 & 1\\ 1 & 0  \end{array}\right), && F_2 = \left( \begin{array}{cc} 1 & 1\\ 0 & 1  \end{array}\right),\\
	E_3 = \left( \begin{array}{cc} 1 & 0\\ 1 & 1  \end{array}\right), && F_3 = \left( \begin{array}{cc} 1 & 0\\ 0 & 1  \end{array}\right).		
\end{eqnarray*}
Then the DM-decomposition $A_{\rm DM}$ is given by
\begin{eqnarray*}
\setlength{\dashlinedash}{0.5pt}
\setlength{\dashlinegap}{0.5pt}
A_{\rm DM} & =& 
\left(
\begin{array}{cccccc}
	0 &   &   & 1 &   &  \\
	1 &   &   & 0 &   &  \\
	  & 1 &   &   &   & 0 \\
	  & 0 &   &   &   & 1 \\
	  &   & 0 &   & 1 &  \\
	  &   & 1 &   & 1 & 
\end{array}
\right)^{\top}
\left(
\begin{array}{cc|cc|cc}
	1 & 0 & 1 & 1 & 0 & 0 \\
	0 & 0 & 1 & 1 & 1 & 1 \\
	\hline
	1 & 1 & 1 & 1 & 1 & 0 \\
	0 & 0 & 0 & 0 & 1 & 0 \\
	\hline
	1 & 0 & 1 & 1 & 1 & 0 \\
	1 & 0 & 1 & 1 & 0 & 0
\end{array}
\right) 
\left(
\begin{array}{cccccc}
	  &   & 0 &   & 1 &   \\
	  &   & 1 &   & 0 &   \\
	1 &   &   & 1 &   &   \\
	1 &   &   & 0 &   &   \\
	  & 0 &   &   &   & 1 \\
	  & 1 &   &   &   & 0
\end{array}
\right)\\
& = & \left(
\setlength{\dashlinedash}{0.5pt}
\setlength{\dashlinegap}{0.5pt}
\begin{array}{cccccc}
\cdashline{1-2}
  & \multicolumn{1}{:c:}{1} & 0 & 1 & 0 & 1 \\
\cdashline{2-3}
  &    & \multicolumn{1}{:c:}{1} & 1 & 1 & 1 \\
\cdashline{3-5}
  &    &          & \multicolumn{1}{:c}{1}&  \multicolumn{1}{c:}{1} &0 \\
  &   &          & \multicolumn{1}{:c}{1}&  \multicolumn{1}{c:}{1} &0 \\
\cdashline{4-6}
  &   &    &   &   & \multicolumn{1}{:c:}{1} \\
  &   &    &   &   & \multicolumn{1}{:c:}{1} \\
\cdashline{6-6} 
\end{array}
\right),
\end{eqnarray*}
where all empty entries are zero, 
and diagonal blocks $D_{0},D_1,D_2,D_3, D_{\infty} (=\emptyset)$ 
are indicated by dashed boxes. 
\end{Ex}

\section*{Acknowledgments}
We thank Kazuo Murota and 
Shin-ichi Tanigawa for comments and discussion. 
We also thank Yuni Iwamasa for careful reading, and the referee for helpful comments.
The work was partially supported by JSPS KAKENHI Grant Numbers JP25280004, JP26330023, JP26280004, JP17K00029.

\end{document}